 \newtheorem{thm}{Theorem}[section]
 \newtheorem{lem}[thm]{Lemma}
 \newtheorem{prop}[thm]{Proposition}
 \theoremstyle{definition}
 \newtheorem{defn}{Definition}
 \newtheorem{rem}[thm]{Remark}
  \newtheorem{prob}{Problem}
 \newtheorem{condition}{Condition}
 \numberwithin{equation}{section}
\newcommand{\tr}{\mathrm{tr}\,}
\newcommand{\LF}{\mathcal{L}}
\newcommand{\df}{\dot{f}}
\newcommand{\ddf}{\ddot{f}}
\newcommand{\abs}[1]{\lvert #1\rvert}
\numberwithin{equation}{section}
\newcounter{rom}
\renewcommand{\therom}{(\roman{rom})}
{\end{list}}
\title[Self-similar solutions to fully nonlinear curvature flows]%
{Self-similar solutions to fully nonlinear curvature flows by high powers of curvature}
\begin{document}

\author[S. Gao]{Shanze Gao}
\address{School of Mathematics and Statistics,
	Shaanxi Normal University, Xi'an 710119, P. R. China}
\email{\href{mailto:gaoshanze@snnu.edu.cn}{gaoshanze@snnu.edu.cn}}

\author[H. Li]{Haizhong Li}
\address{Department of Mathematical Sciences,
Tsinghua University,  Beijing 100084, P. R. China}
\email{\href{mailto:lihz@tsinghua.edu.cn}{lihz@tsinghua.edu.cn}}

\author[X. Wang]{Xianfeng Wang}
\address{School of Mathematical Sciences and LPMC,
Nankai University,
Tianjin 300071,  P. R. China}
\email{\href{mailto:wangxianfeng@nankai.edu.cn}{wangxianfeng@nankai.edu.cn}}

\keywords {self-similar solution, high powers of curvature, fully nonlinear curvature flow, inverse concave}
\subjclass[2010]{Primary 53C44; Secondary 35J60}

\begin{abstract} In this paper, we investigate closed strictly convex hypersurfaces in $\mathbb{R}^{n+1}$ which shrink self-similarly under a large family of fully nonlinear curvature flows  by high powers of curvature. When the speed function is given by powers of a homogeneous of degree $1$ and inverse concave function of the principal curvatures with power greater than $1$, we prove that the only such hypersurfaces are round spheres. We also prove that slices are the only closed strictly convex self-similar solutions to such curvature flows in the hemisphere $\mathbb{S}^{n+1}_{+}$ with power greater than or equal to $1$.
\end{abstract}

\date{}

 \maketitle

\section{Introduction}
\label{Sec:Intro}

In the influential paper \cite{HUISKEN1984}, Huisken proved that
 for any strictly convex initial hypersurface $M^n$ in Euclidean space $\mathbb{R}^{n+1}$ with $n\geq 2$,
 there exists a unique smooth solution to the mean curvature flow and the solution contracts to a ``round'' point in finite time.
 We call a hypersurface given by $X:M^n\to\mathbb{R}^{n+1}$ a \textit{self-shrinker} if it satisfies the following elliptic equation
\begin{equation}
H=\langle X,\nu\rangle,
\end{equation}
where $H$ is the mean curvature and $\nu$ is the outward unit normal vector of $M^n$ in $\mathbb{R}^{n+1}$.
 It is well known that self-shrinkers play an important role in the study of the mean curvature flow (cf. \cite{c-m,h90}).
 Not only they correspond to self-shrinking solutions to the mean curvature flow, but also they describe all possible Type I blow-ups at a given singularity of the mean curvature flow.
 In another celebrated paper \cite{h90}, Huisken proved that
spheres are the only compact self-shrinkers in $\mathbb{R}^{n+1}$ with nonnegative mean curvature.

\subsection{Self-similar solutions to contracting curvature flows in Euclidean space}\label{sec1.1}
After the famous work of Huisken, there have been lots of beautiful results on contracting curvature
flows in Euclidean space. Let $M^n$ be a smooth, closed manifold with $n\geq 2$ and $X_0:M^n\to\mathbb{R}^{n+1}$ be a smooth immersion which is strictly convex,
we consider the contracting curvature flow given by a family of smooth immersions $X(\cdot ,t):M^n \times [0,T) \to \mathbb{R}^{n+1}$ solving the evolution equation
\begin{equation}\begin{aligned}\label{flow}
\left\{
\begin{array}{ll}
\frac{\partial X}{\partial t}(\cdot, t) = - F^\alpha (\cdot, t)\nu(\cdot, t),\\
X(\cdot , 0) = X_0 (\cdot),
\end{array}
\right.
\end{aligned}\end{equation}
where $\alpha\geq1$,  $\nu$ is the outward unit normal vector field of $M_t=X_t(M^n)$ and  $F$ is a homogeneous of degree $1$  function of  the principal  curvatures of $M_t$.
Analogous to the mean curvature flow case, we call $X:M^n\to\mathbb{R}^{n+1}$ a \textit{self-similar solution} to the flow \eqref{flow} if it
satisfies
\begin{equation}\label{1-1}
F^{\alpha}=\langle X, \nu \rangle,
\end{equation}
where $\nu$ is the outward unit normal vector field of $M^n$, $\alpha(\geq 1)$ is a constant and
$F(\mathcal{W})=f(\kappa(\mathcal{W}))$ is a smooth, symmetric function of the principal curvatures
$
(\kappa_1,\ldots,\kappa_n),
$
which are defined by eigenvalues of the Weingarten matrix $\mathcal{W}=(h_i^j)$ of $M^n$. 

Let $\Gamma_+=\{(\kappa_1,\ldots,\kappa_n)\in \mathbb{R}^n:~\kappa_i>0,\text{ for any }~i=1,2,...,n\}$ be the
positive quadrant in $\mathbb{R}^n$. We assume that $F(\mathcal{W})=f(\kappa(\mathcal{W}))$  satisfies the following properties.
\begin{condition}\label{condtn}
\begin{itemize}
  \item[(i)] $f$ is smooth, symmetric and positive on $\Gamma_+$.
  \item[(ii)] $f$ is strictly increasing in each argument, i.e., ${\partial f}/{\partial \kappa_i}>0$ on $\Gamma_+$, for any $i=1,\ldots,n$.
  \item[(iii)] $f$ is homogeneous of degree $1$, i.e., $f(k\kappa)=kf(\kappa)$ for any $k>0$ and $\kappa=(\kappa_1,\ldots,\kappa_n)\in \Gamma_+$.
\end{itemize}
\end{condition}

It can be checked straightforwardly (cf. \cite{McCoy11}) that the homothetic immersions given by
\begin{equation}\label{homothetic}
X(\cdot, t)=((\alpha+1)(T-t))^{\frac{1}{\alpha+1}}X(\cdot)
\end{equation}
give rise to the solution of \eqref{flow} up to  tangential diffeomorphisms if $X$ satisfies \eqref{1-1}, and the corresponding
self-similar hypersurfaces shrink to a point at time $T$.
Thus self-similar solutions to curvature flows can be regarded as natural generalizations to self-shrinkers for mean curvature flow
and the study of them is crucial for the understanding of the asymptotic behavior of the corresponding curvature flows.
In the following, we recall some known results for contracting curvature flows and self-similar solutions in $\mathbb{R}^{n+1}$.

When $\alpha=1$, the contracting curvature flows have been studied intensively.
Chow studied the flows by the n-th root of the Gauss curvature \cite{CHOW1985} and the square root of the scalar curvature \cite{CHOW1987} (with an initial pinching condition). In a series of papers \cite{A1994CVPDE,BAndrews07,A2010CVPED}, by proving some powerful pinching estimates, Andrews  extended the results of Huisken and Chow to several wide classes of curvature flows, with speeds given by homogeneous of degree $1$ functions of the principal curvatures and satisfying some natural conditions.
McCoy \cite{McCoy11} obtained various classification results for a large class of fully nonlinear curvature flows with $\alpha=1$, which can be regarded as
analogues of Huisken's result for self-shrinkers. In particular, McCoy \cite{McCoy11} proved the following result.

\begin{thm}[\cite{McCoy11}]\label{Th:m}
Let $M^n$ be  a closed, strictly convex  hypersurface in $\mathbb{R}^{n+1}$ with $n\geq 2$, satisfying
\begin{equation*}\label{Eq:alpha=1}
F=\langle X, \nu\rangle,
\end{equation*}
where $F$ satisfies Condition \ref{condtn} and is inverse concave, then $M^n$ must be a round sphere.
\end{thm}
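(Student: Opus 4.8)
The plan is to transfer the problem to the image of the Gauss map, where the hypothesis that $f$ is inverse concave becomes precisely the statement that the governing operator is concave, and then to force umbilicity by a maximum principle. Since $f>0$ on $\Gamma_+$ and $M^n$ is closed and strictly convex, Condition~\ref{condtn} gives $u:=\langle X,\nu\rangle=F>0$ everywhere, so the origin lies inside the convex body bounded by $M^n$, which can therefore be parametrized over $\mathbb{S}^n$ by the inverse Gauss map, with support function $u\colon\mathbb{S}^n\to(0,\infty)$. The principal radii of curvature $r_i=\kappa_i^{-1}$ are the eigenvalues of $W:=\bar\nabla^2u+u\,\bar g$, which is positive definite by strict convexity; if $F_*(r):=1/f(r_1^{-1},\dots,r_n^{-1})$, then the inverse-concavity assumption says exactly that $F_*$ is concave (it is also monotone and $1$-homogeneous), and the self-similar equation becomes the fully nonlinear elliptic equation $F_*(W)=1/u$ on $\mathbb{S}^n$.

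Back on $M^n$, I would use the drift operator $\mathcal{L}:=\dot F^{ij}\nabla_i\nabla_j-\langle X^{\top},\nabla(\,\cdot\,)\rangle$ associated to the rescaled flow; it is strictly elliptic and has no zeroth-order term, so on the closed manifold $M^n$ every $\mathcal{L}$-subsolution is constant. A computation using the equation $F=\langle X,\nu\rangle$, the Codazzi relations and Euler's identity gives the clean formulas $\mathcal{L}F=F\bigl(1-\sum_i\dot f_i\kappa_i^2\bigr)$ and $\mathcal{L}\bigl(\tfrac12|X|^2\bigr)=\sum_i\dot f_i-|X|^2$. Concavity of $F_*$ translates into the inverse-concavity inequalities for $f$, of which I would need two: the first-order one, $\sum_i\dot f_i\kappa_i^2\ge f(\kappa)^2/f(1,\dots,1)$, and the sharp second-order one (the Andrews inequality for inverse concave $f$), whose diagonal form is $\sum_{k,l}\ddot f^{kl}\zeta_k\zeta_l+2\sum_k\kappa_k^{-1}\dot f_k\zeta_k^2\ge 2f^{-1}\bigl(\sum_k\dot f_k\zeta_k\bigr)^2$ for all $\zeta\in\mathbb{R}^n$, together with its tensorial refinement carrying the off-diagonal divided differences $(\dot f_k-\dot f_l)/(\kappa_k-\kappa_l)$.

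Next I would pick a scale-invariant, nonnegative curvature-pinching quantity $Q$ that vanishes exactly at umbilic points — for instance $\kappa_{\max}/\kappa_{\min}-1$, or a smooth symmetric substitute such as a suitably normalized $\sum_i\kappa_i^2-\tfrac1n\bigl(\sum_i\kappa_i\bigr)^2$ — and compute the elliptic identity it satisfies on the self-similar hypersurface, which has the schematic form $\mathcal{L}Q+(\text{gradient terms})+R(Q)=0$. Using the inverse-concavity inequalities, the goal is to show that the gradient terms are favorably signed and the reaction term $R(Q)$ is nonpositive, with equality only where all the $\kappa_i$ agree; then $\mathcal{L}Q\ge 0$, so $Q$ is constant on $M^n$, hence $Q\equiv\max Q$, and feeding this back into the identity at a point realizing the maximum forces $R(Q)=0$ and hence umbilicity there, so $Q\equiv 0$. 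Thus $M^n$ is totally umbilic; a closed umbilic hypersurface of $\mathbb{R}^{n+1}$ is a round sphere, which the equation $F=\langle X,\nu\rangle$ pins down to be centred at the origin with radius $f(1,\dots,1)^{1/2}$. As an alternative vehicle one can instead try to show that $\mathcal{L}\bigl(\tfrac12|X|^2\bigr)$ is one-signed, which would give $|X|^2\equiv\text{const}$ directly.

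The main obstacle is exactly that sign analysis: the gradient and reaction terms in the identity for $Q$ are the second-order contributions produced by differentiating the concave operator $F_*(W)$ (equivalently $F(\mathcal{W})$) twice, and bare concavity of $F_*$ is not enough to absorb them — one needs the full strength of the Andrews inverse-concavity inequality, which is just sufficient, and the bookkeeping (in particular the off-diagonal divided-difference terms and their interaction with the drift term $\langle X^{\top},\nabla(\,\cdot\,)\rangle$) is the delicate part. A secondary technical issue is that $F_*$, hence $f$, may fail to be $C^2$ where principal curvatures coincide, so the maximum principle must be run in the viscosity sense, or one must phrase everything in terms of elementary symmetric functions of $W$; strict convexity of $M^n$ and the smoothness of $f$ on all of $\Gamma_+$ keep this under control.
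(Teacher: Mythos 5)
You have set up the right framework (the drift operator, the formulas $\mathcal{L}F=F\bigl(1-\sum_i\dot f^i\kappa_i^2\bigr)$ and $\mathcal{L}\bigl(\tfrac12|X|^2\bigr)=\sum_i\dot f^i-|X|^2$ are correct, and you name the correct inverse-concavity inequalities \eqref{inv-conc-1}, \eqref{s2:inv-conc}), but the proof has a genuine gap exactly where you flag your ``main obstacle'': the decisive computation is never carried out, and the pinching quantities you propose are not ones for which it can be expected to close under inverse concavity alone. The normalized trace-free quantity $\sum_i\kappa_i^2-\tfrac1n\bigl(\sum_i\kappa_i\bigr)^2$ is the vehicle adapted to \emph{concave} speeds; for inverse concave $F$ the second-derivative terms $\ddot F$ enter with the wrong structure and cannot be absorbed by \eqref{inv-conc-1}. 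The quantities that do work are built from the inverse Weingarten map $b=\mathcal{W}^{-1}$, precisely so that the combination $\ddot F^{ij,st}h_{ijp}b^{kp}h_{stq}b^{ql}+2\dot F^{ij}b^{ks}h_{sti}b^{pt}b^{lq}h_{pqj}$ appearing in the evolution of $b^{kl}$ is bounded below via inverse concavity: the paper's second proof (Section \ref{sec:proof2}, which for $\alpha=1$, $\epsilon=0$ is McCoy's argument) applies the tensor maximum principle to $T_{kl}=F\,b^{kl}-\beta g^{kl}$ and verifies the sign condition \eqref{goal2} by the explicit estimates \eqref{Q2final}--\eqref{Q2Q3}; the paper's first proof uses $W=F/\kappa_{\min}$ at its maximum together with the smooth auxiliary $Z=F\,\abs{b}^2/\tr b$ near that point. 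Your $\kappa_{\max}/\kappa_{\min}-1$ is closer in spirit to these, but you give no identity for it and no absorption of the gradient and divided-difference terms, so as written the argument does not reach umbilicity.

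Two further points. First, your fallback — showing $\mathcal{L}\bigl(\tfrac12|X|^2\bigr)=\sum_i\dot f^i-|X|^2$ is one-signed — does not go through: inverse concavity gives \eqref{s2:ic-2}, i.e.\ a lower bound on $\sum_i\dot f^i\kappa_i^2$, not on $\sum_i\dot f^i$ (the bound $\sum_i\dot f^i\geq f(1,\ldots,1)$ in Lemma \ref{lem-conc} requires concavity), and even with such a bound there is no pointwise comparison with $|X|^2$. Second, the regularity worry is misplaced: $f$ is smooth on all of $\Gamma_+$ by Condition \ref{condtn} and the associated matrix function $F$ is smooth by Glaeser's theorem; what is merely Lipschitz at umbilic points is your test function $\kappa_{\max}/\kappa_{\min}$ itself. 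The paper circumvents this not by viscosity solutions but either by passing to the smooth quantity $Z$ in a neighborhood of the maximum point of $W$ (Section \ref{sec3.3}) or by the tensor maximum principle with the $\Gamma$-correction terms in \eqref{goal} (Section \ref{sec:proof2}); some such device would be needed in your scheme as well.
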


For the flow with $\alpha>1$, there are fewer results. The first prominent result was proved by  Andrews in \cite{An-99} for Gauss curvature flow, where  Firey's  conjecture that convex surfaces moving by their Gauss curvature become spherical as they contract to points was proved.
Recently, the generalized Firey's conjecture proposed by Andrews in \cite{Andrews1996} was completely solved \cite{g-n,AGN-16,BCD}, that is, the solutions of the flow by powers of the Gauss curvature  converge to spheres for any $\alpha>\frac{1}{n+2}$. This is a breakthrough in the study of curvature flows. In fact,  Andrews \cite{Andrews1996} proved that in the affine invariant case $\alpha=\frac{1}{n+2}$,
the flow converges to an ellipsoid.
Guan and Ni \cite{g-n} proved that convex hypersurfaces in $\mathbb{R}^{n+1}$ contracting by the Gauss curvature flow  converge (after rescaling to fixed volume) to a smooth uniformly convex self-similar solution of  the flow. Andrews, Guan and Ni \cite{AGN-16} extended the results in \cite{g-n}  to the flow by powers of the Gauss curvature $K^\alpha$ with $\alpha>\frac{1}{n+2}$.  Brendle, Choi and Daskalopoulos \cite{BCD} proved the following uniqueness result for self-similar solutions to the $K^{\alpha}$ flow with $\alpha> \frac{1}{n+2}$.

\begin{thm}[\cite{BCD}]\label{thm-bcd}
Let $M^n$ be  a closed, strictly convex  hypersurface in $\mathbb{R}^{n+1}$ with $n\geq 2$, satisfying
\begin{equation*}
K^\alpha=\langle X, \nu\rangle,
\end{equation*}
where $K$  is the Gauss curvature of $M^n$. If $\alpha> \frac{1}{n+2}$, then $M^n$ must be a round sphere.
\end{thm}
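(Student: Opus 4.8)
\emph{Proof strategy.} First I would pass to the support function. Since $\langle X,\nu\rangle = K^{\alpha}>0$ everywhere, $M^{n}$ encloses the origin and is parametrized over $\mathbb{S}^{n}$ by its inverse Gauss map; writing $u$ for the support function and $A := \bar\nabla^{2}u + u\,\bar g$ for the positive-definite operator of principal radii of curvature, one has $K = (\det A)^{-1}$ and $\langle X,\nu\rangle = u$, so \eqref{1-1} becomes the Monge--Amp\`ere-type equation $\det A = u^{-1/\alpha}$ on $\mathbb{S}^{n}$, the constant factor being normalized to $1$ by rescaling $M$ (legitimate since $n+\tfrac{1}{\alpha}\ne 0$). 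The constant solution $u\equiv\text{const}$ corresponds to the round sphere of the appropriate radius centred at the origin, and the task is to show it is the only strictly convex solution.

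Next I would run a pointwise maximum-principle argument in the tradition of Huisken's work on self-shrinkers and of McCoy's Theorem \ref{Th:m}. Let $\mathcal{L} := (A^{-1})^{ij}\bar\nabla_{i}\bar\nabla_{j}$ be the elliptic linearization of $\log\det A$ and set $W := \big(\operatorname{tr}_{\bar g} A\big)\,u^{1/(n\alpha)}$. By the arithmetic--geometric mean inequality and the equation, $\operatorname{tr}_{\bar g} A \ge n(\det A)^{1/n} = n\,u^{-1/(n\alpha)}$, so $W \ge n$ pointwise, with equality at a point precisely when $M$ is umbilic there. I would then compute $\mathcal{L}W$ (or $\mathcal{L}\log W$), using the equation to eliminate derivatives of $\det A$, the identity $\mathcal{L}u = n - u\operatorname{tr}_{\bar g}(A^{-1})$, and the Simons-type commutation identities on $\mathbb{S}^{n}$, aiming to show that at an interior minimum of $W$ the resulting differential inequality forces the umbilic defect to vanish. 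Then $W\equiv n$, so $M$ is totally umbilic and hence a round sphere.

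The hard part will be the sharpness of the bound $\alpha>\tfrac{1}{n+2}$. The range $\alpha\ge\tfrac{1}{n}$ is more accessible: the endpoint $\alpha=\tfrac{1}{n}$ is exactly Theorem \ref{Th:m} applied to $F=K^{1/n}$, which is its own inverse and hence inverse concave, and for $\alpha>\tfrac{1}{n}$ the same inverse-concavity structure of $K^{1/n}$ keeps the gradient and reaction terms in the computation above under control. In the full range $\tfrac{1}{n+2}<\alpha<\tfrac{1}{n}$, however, the simple quantity $W$ will not suffice: one must replace it by an anisotropically weighted analogue and perform a delicate sign analysis of the reaction terms as $\alpha$ approaches the affine-critical exponent $\tfrac{1}{n+2}$ --- the value at which the equation becomes equi-affinely invariant and the ellipsoids arise as genuine non-round self-similar solutions (cf.\ Andrews \cite{Andrews1996}). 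Pushing the pointwise estimate through this borderline, presumably by coupling it with a global integration or with an argument via the affine support function that rules out the ellipsoids, is the crux of the matter, and is where the bulk of the effort in \cite{BCD} lies.
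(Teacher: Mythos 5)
This theorem is not proved in the paper at all: it is quoted from \cite{BCD}, and the paper's own machinery reaches, in the Gauss-curvature case, only the range $\alpha\ge\tfrac1n$ (write $K^\alpha=(K^{1/n})^{n\alpha}$; since $K^{1/n}=E_n^{1/n}$ satisfies Condition \ref{condtn} and is inverse concave, $\alpha=\tfrac1n$ is Theorem \ref{Th:m} and $\alpha>\tfrac1n$ is Theorem \ref{main} with power $n\alpha>1$). Your set-up is correct as far as it goes: the support-function reduction to $\det(\bar\nabla^2u+u\bar g)=u^{-1/\alpha}$ on $\mathbb{S}^n$, the normalization, and the observation that $\alpha\ge\tfrac1n$ is accessible through McCoy-type inverse-concavity arguments are all sound.

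The genuine gap is that for the actual content of the statement, the range $\tfrac1{n+2}<\alpha<\tfrac1n$, your proposal contains no argument. You concede this explicitly: the quantity $W=(\mathrm{tr}\,A)\,u^{1/(n\alpha)}$ ``will not suffice,'' and the decisive step is ``where the bulk of the effort in \cite{BCD} lies,'' with only speculation about an ``anisotropically weighted analogue'' or ``a global integration.'' That missing step is precisely the theorem. Moreover, the sketched maximum-principle scheme is not set up to close even in principle: $W\ge n$ with equality exactly at umbilic points, so information at an interior \emph{minimum} of $W$ does not by itself control the umbilic defect globally; one needs either an estimate at the maximum or a strong-maximum-principle argument for a quantity whose structure degenerates exactly at the affine-critical exponent, where ellipsoids solve the equation. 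What Brendle--Choi--Daskalopoulos actually do is apply the maximum principle to test functions combining $K^\alpha$, eigenvalues of the inverse second fundamental form $b$, and $|X|^2$, with the coefficient of $|X|^2$ tuned to the homogeneity so that the zeroth-order and gradient terms acquire a favorable sign precisely when $\alpha>\tfrac1{n+2}$; this is also the template the present paper adapts (the quantities $Z$ and $W$ of Section \ref{Sec:Z}, and the tensor $T_{kl}$ of Section \ref{sec:proof2}), but only for powers greater than $1$ of a degree-one speed. Without exhibiting such a quantity and carrying out the sign analysis in the subcritical range, your submission is a plan referencing the known proof rather than a proof.
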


The asymptotical behavior for flow of convex hypersurfaces by arbitrary speeds (other than powers of Gauss curvature) which are smooth homogeneous functions of the principal curvatures of degree greater than $1$ is
still an open problem to be investigated. Under a certain curvature pinching condition, Andrews and McCoy \cite{AM-12} proved that the flows converge to round spheres in finite time after proper rescaling. Andrews, McCoy and Zheng \cite{A-M-Z} constructed some examples of singular behavior in flow of convex hypersurfaces with arbitrary power $\alpha$, which show that initially smooth and uniformly convex hypersurfaces can evolve to become non-smooth or non-convex.  For more results about contracting curvature flows with power $\alpha>1$, we refer to the recent paper by the second and the third authors together with Wu \cite{LWW} and the references therein. Andrews, McCoy and Zheng's result \cite{A-M-Z} indicates that it is a challenging
problem to capture the whole picture of  flow of convex hypersurfaces by arbitrary speeds.

Based on the resolution of the the generalized Firey's conjecture  mentioned
above, one can expect that the study of self-similar solutions will be important for understanding the asymptotical behavior of flow of convex hypersurfaces by arbitrary speeds.
In this respect, there are some important progresses recently.
In order to state some of the existing results and the main result of this paper, we recall an additional condition of the curvature function $F(\mathcal{W})=f(\kappa(\mathcal{W}))$ and the definition of inverse concave function.
\begin{condition}\label{condtn2}
For all $(y_1,\ldots,y_n)\in\mathbb{R}^n$, we have
\begin{equation}
\sum_i\frac{1}{\kappa_i}\frac{\partial \log f}{\partial \kappa_i}y_i^2+\sum_{i,j}\frac{\partial^2\log f}{\partial \kappa_i\partial \kappa_j}y_i y_j\geq 0.
\end{equation}
\end{condition}

\begin{defn}
  We say that $f$ is inverse concave if the function
  \begin{equation}\label{s1:f*}
    f_*(x_1,\ldots,x_n):=\frac 1{f(\frac 1{x_1},\ldots,\frac 1{x_n})}
  \end{equation}
  is concave.
\end{defn}
\begin{rem}
It is checked in Lemma \ref{lem:condn2} that if $f$ satisfies Condition \ref{condtn} and  Condition \ref{condtn2}, then $f$ is inverse concave. Therefore, for functions satisfying Condition \ref{condtn}, inverse concavity is a weaker condition than Condition \ref{condtn2}.
For more properties of inverse concave functions and some examples, we refer to Section \ref{sec2}.
\end{rem}

By  adapting the test functions introduced by Choi-Daskalopoulos \cite{c-d} and Brendle-Choi-Daskalopoulos \cite{BCD}
and exploring the properties of the $k$-th elementary symmetric function $\sigma_k$ intensively, the first two authors together with Ma \cite{GLM} proved the following uniqueness result for self-similar solutions to contracting curvature flows.

\begin{thm}[\cite{GLM}]\label{glmthm} Let $M^n$ be a closed strictly convex hypersurface in $\mathbb{R}^{n+1}$ with $n\geq 2$, satisfying
\begin{equation*}
F^{\alpha}=\langle X,\nu\rangle,
\end{equation*}
where $F$ satisfies Condition \ref{condtn} and Condition \ref{condtn2}. If $\alpha> 1$, then $M^n$ must be a round sphere.
\end{thm}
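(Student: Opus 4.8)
The plan is to pass to the support-function parametrization, reduce \eqref{1-1} to a scalar elliptic equation on $\mathbb{S}^{n}$, and then run a strong-maximum-principle argument on an auxiliary quantity modelled on the test functions of Choi--Daskalopoulos \cite{c-d} and Brendle--Choi--Daskalopoulos \cite{BCD}. Since $M^{n}$ is closed and strictly convex and $\langle X,\nu\rangle=F^{\alpha}>0$, the inverse Gauss map is a diffeomorphism onto $\mathbb{S}^{n}$; writing the support function $s:\mathbb{S}^{n}\to(0,\infty)$ we have $X(z)=s(z)z+\bar\nabla s(z)$ and $\nu=z$, where $\bar\nabla$, $\bar g$ are the Levi-Civita connection and metric of the round sphere. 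The radii-of-curvature matrix $A_{ij}:=\bar\nabla_{i}\bar\nabla_{j}s+s\,\bar g_{ij}$ is positive definite with eigenvalues $1/\kappa_{1},\dots,1/\kappa_{n}$, and since $\langle X,\nu\rangle=s$, equation \eqref{1-1} becomes
\begin{equation*}
\Phi(A_{ij})=s^{-1/\alpha},\qquad \Phi:=F_{*}\ \text{acting on the radii matrix.}
\end{equation*}
By Condition~\ref{condtn} together with the inverse concavity furnished by Condition~\ref{condtn2} (Lemma~\ref{lem:condn2}), $\Phi$ is smooth, strictly monotone, positively homogeneous of degree one and concave in $A_{ij}$; hence $\mathcal{L}:=\dot\Phi^{ij}\bar\nabla_{i}\bar\nabla_{j}$ is uniformly elliptic, $\dot\Phi^{ij}A_{ij}=\Phi$ by Euler's relation, and $\ddot\Phi^{ij,kl}\eta_{ij}\eta_{kl}\le 0$ for every symmetric $\eta$.

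It suffices to show that $A$ is umbilic, $A_{ij}=\lambda\,\bar g_{ij}$ pointwise. Indeed, differentiating this relation and using the curvature identities on $\mathbb{S}^{n}$ forces $\lambda$ to be constant, so $\bar\nabla_{i}\bar\nabla_{j}s=(\lambda-s)\bar g_{ij}$ and therefore $s=\lambda+\langle a,z\rangle$ for some $a\in\mathbb{R}^{n+1}$; the equation then reads $\lambda\,\Phi(\mathrm{Id})=(\lambda+\langle a,z\rangle)^{-1/\alpha}$, whose right-hand side is constant only if $a=0$, whence $s\equiv\lambda$ and $M^{n}$ is a round sphere (necessarily centred at the origin).

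To produce umbilicity I would adapt the test functions of \cite{c-d,BCD}: consider $Z$ built from $|X|^{2}=|\bar\nabla s|^{2}+s^{2}$, $\langle X,\nu\rangle=s$ and $F$ (equivalently from the eigenvalues of $A$ and from $s$), a natural candidate being
\begin{equation*}
Z=s^{-\beta}\bigl(|\bar\nabla s|^{2}+c\,s^{2}\bigr)
\end{equation*}
for constants $\beta=\beta(\alpha,n)$, $c=c(\alpha,n)$ to be fixed (possibly after adding correction terms). Using $\bar\nabla^{2}s=A-s\bar g$, the first and second covariant derivatives of $\Phi(A)=s^{-1/\alpha}$, the Simons-type commutation identities on $\mathbb{S}^{n}$ relating $\bar\nabla_{k}A_{ij}$ to $\bar\nabla_{i}A_{kj}$, Euler's relation, the concavity inequality $\ddot\Phi\le 0$ to discard the cubic-derivative terms with the correct sign, and Condition~\ref{condtn2} in its equivalent form to dominate the remaining first-order terms, the aim is a differential inequality of the shape
\begin{equation*}
\mathcal{L}Z\ \ge\ \langle b,\bar\nabla Z\rangle,
\end{equation*}
with equality exactly at umbilic points; the restriction $\alpha>1$ should be precisely what secures the sign of the resulting lower-order term (this is why the borderline case $\alpha=1$ must be handled separately, as in Theorem~\ref{Th:m}). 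Since $M^{n}$ is closed, $Z$ attains an interior maximum, so the strong maximum principle gives $Z\equiv\mathrm{const}$, and then the equality case yields $A_{ij}=\lambda\bar g_{ij}$ everywhere, finishing the proof by the previous paragraph.

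The crux is the computation in the third step: one must choose $\beta$, $c$ (and any correction terms) so that, using \emph{only} the structural Conditions~\ref{condtn}--\ref{condtn2} and no special feature of a concrete speed such as $\sigma_{k}^{1/k}$ or $K^{1/n}$, the many gradient terms produced by commuting covariant derivatives and by the chain rule for $\Phi$ recombine with a definite sign. Condition~\ref{condtn2}, i.e. $\sum_{i}\kappa_{i}^{-1}(\partial\log f/\partial\kappa_{i})y_{i}^{2}+\sum_{i,j}(\partial^{2}\log f/\partial\kappa_{i}\partial\kappa_{j})y_{i}y_{j}\ge 0$, is tailored exactly to dominate these terms, while arranging the leftover lower-order term to have the right sign is what pins down the hypothesis $\alpha>1$; making both requirements hold simultaneously, and identifying the correct normalizing powers in $Z$, is the delicate point of the argument.
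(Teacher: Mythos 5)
Your overall strategic outline is the right one — this statement is proved (in \cite{GLM}, and more generally in Theorem \ref{main} of this paper) by a Choi--Daskalopoulos/Brendle--Choi--Daskalopoulos type test-function argument in which the hypothesis $\alpha>1$ enters through the sign of a zero-order term, and your support-function reduction and the final Schur-type step (constancy of $\lambda$ from $A_{ij}=\lambda\bar g_{ij}$, then $s=\lambda+\langle a,z\rangle$ and $a=0$) are fine. But the proposal has a genuine gap: the entire analytic core is deferred. You never compute $\mathcal{L}Z$ for your candidate, never exhibit the choice of $\beta$, $c$ or the ``correction terms'', and never verify that Conditions \ref{condtn}--\ref{condtn2} suffice to give the claimed differential inequality; you state explicitly that making the gradient terms recombine with a sign ``is the delicate point of the argument''. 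That delicate point \emph{is} the proof. In the actual arguments, inverse concavity is used in precise quantitative forms (e.g.\ the inequalities \eqref{inv-conc-1}, \eqref{s2:inv-conc}, \eqref{eq:fkkappak}) at specific places in the estimate of the second-derivative and gradient terms, and the borderline coefficient that survives is exactly $\frac{(n-1)(\alpha-1)}{n\alpha}\kappa_1^{-2}$; none of this is reproduced or replaced by your sketch.

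There is also a structural problem with the proposed test function itself. The quantity $Z=s^{-\beta}\bigl(|\bar\nabla s|^{2}+c\,s^{2}\bigr)$ is built only from $\langle X,\nu\rangle=s$ and $|X|^{2}=|\bar\nabla s|^{2}+s^{2}$; it does not see the anisotropy of the radii matrix $A$ at all, so there is no reason its constancy (or the equality case in a maximum-principle inequality for it) should force $A_{ij}=\lambda\bar g_{ij}$ — your assertion ``with equality exactly at umbilic points'' is unsupported. The quantities that make the argument work pair the speed with an extreme curvature or its smooth surrogate: in \cite{GLM} and in Section \ref{Sec:Z} here one uses $W=F^{\alpha}/\kappa_{\min}-\frac{\alpha-1}{2\alpha}|X|^{2}$ together with the smooth $Z=F^{\alpha}\,\frac{|b|^{2}}{\operatorname{tr}b}-\frac{\alpha-1}{2\alpha}|X|^{2}$, precisely so that the maximum-point analysis detects non-umbilicity; a further issue your plan ignores is that $\kappa_{\min}$ is only Lipschitz, which is why the strong maximum principle is applied to the smooth $Z$ in a neighborhood of the umbilic maximum point of $W$ (where $\partial G/\partial b$ is positive definite). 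Without carrying out a computation of this kind for a curvature-sensitive test function, the proposal does not constitute a proof of the theorem.
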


\begin{rem}Condition \ref{condtn2} and \eqref{eq:fkkappak} (which is actually implied by Condition \ref{condtn2}, see Lemma \ref{lem:condn2}) are two essential inequalities used in the proof of Theorem \ref{glmthm}.
We note that $F$ satisfies Condition \ref{condtn2} if and only if $F^{\alpha}(\alpha> 0)$ satisfies Condition \ref{condtn2}.
\end{rem}

Recently, by modifying the test functions introduced by Choi-Daskalopoulos \cite{c-d} and Brendle-Choi-Daskalopoulos \cite{BCD},
Chen  \cite{ChenJFA} extended the result in Theorem \ref{glmthm} to the case of $F=(\sigma_{n}/\sigma_{k})^{\frac{1}{n-k}}$, which is closely related to the $L_p$-Christoffel-Minkowski problem. Later, Chen and the first author \cite{CG19} extended the result to the case of $F=(\sigma_{k}/\sigma_{l})^{\frac{1}{k-l}}$ with $0\leq l<k\leq n$.

We notice that the functions $(\sigma_{k}/\sigma_{l})^{\frac{1}{k-l}}$ with $0\leq l<k\leq n$ are inverse concave, although
they do not satisfy Condition \ref{condtn2}. It is natural to propose the following problem.
\begin{prob}\label{prob1}
To classify the strictly convex self-similar solutions \eqref{1-1} to contracting curvature flows in $\mathbb{R}^{n+1}$ with $F$ satisfying Condition \ref{condtn} and being inverse concave.
\end{prob}

The aim of this paper is to solve the above problem. In fact, we solve Problem \ref{prob1} by proving the following uniqueness result.

\begin{thm}\label{main} Let $M^n$ be a closed strictly convex hypersurface in $\mathbb{R}^{n+1}$ with $n\geq 2$, satisfying
\begin{equation}\label{maine}
F^{\alpha}=\langle X,\nu\rangle,
\end{equation}
where $F$ satisfies Condition \ref{condtn} and is inverse concave. If $\alpha> 1$, then $M^n$ must be a round sphere.
\end{thm}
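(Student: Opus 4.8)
The plan is to prove that the speed function $F$ is constant on $M^n$, and then to conclude by an elementary argument: if $F\equiv c$, then $\langle X,\nu\rangle\equiv c^{\alpha}$ by \eqref{maine}, so $0=\nabla_i\langle X,\nu\rangle=h_i^{\,j}\langle X,\partial_j\rangle$; since $M^n$ is strictly convex, $(h_i^{\,j})$ is invertible, whence $\langle X,\partial_j\rangle\equiv 0$, so $X=\langle X,\nu\rangle\,\nu=c^{\alpha}\nu$, which says precisely that $M^n$ is the round sphere of radius $c^{\alpha}$ centered at the origin.

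To show $F$ is constant I would run a maximum principle argument in the spirit of Choi--Daskalopoulos \cite{c-d} and Brendle--Choi--Daskalopoulos \cite{BCD}, adapted to fully nonlinear speeds as in \cite{GLM}. Set $\dot F^{ij}=\partial F/\partial h_{ij}$, $\ddot F^{ij,kl}=\partial^2 F/\partial h_{ij}\partial h_{kl}$, and introduce the drift operator
\begin{equation*}
\mathcal{L}\varphi:=\alpha F^{\alpha-1}\dot F^{ij}\nabla_i\nabla_j\varphi-\langle X,\nabla\varphi\rangle,
\end{equation*}
where the last term pairs $\nabla\varphi$ with the tangential part of $X$; by Condition \ref{condtn}(ii) its leading coefficient $\alpha F^{\alpha-1}\dot F^{ij}$ is positive definite, so $\mathcal{L}$ obeys the strong maximum principle. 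Differentiating \eqref{maine} once gives $\alpha F^{\alpha-1}\dot F^{ij}\nabla_k h_{ij}=h_k^{\,l}\langle X,\partial_l\rangle$; differentiating a second time and using Codazzi, Gauss and the homogeneity relations $\dot F^{ij}h_{ij}=F$, $\ddot F^{ij,kl}h_{kl}=0$, one obtains the basic elliptic identities of the problem, namely for $\langle X,\nu\rangle$, for $F$, for the Weingarten tensor, and for the gradient quantities $|\nabla F|^2$ and $\dot F^{ij}\nabla_i F\,\nabla_j F$. In particular one computes
\begin{equation*}
\mathcal{L}\langle X,\nu\rangle=\alpha\,\langle X,\nu\rangle\bigl(1-F^{\alpha-1}\dot F^{ij}(h^2)_{ij}\bigr),
\end{equation*}
which plays here the role of Huisken's self-shrinker identity $\mathcal{L}H=H(1-|A|^2)$ for the mean curvature flow (the case $\alpha=1$, $F=H$).

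The heart of the matter is then the analysis of an auxiliary function built from $\nabla F$ and $F$, of the prototype form
\begin{equation*}
W:=\frac{\dot F^{ij}\nabla_i F\,\nabla_j F}{F^{\theta}},
\end{equation*}
for a suitable exponent $\theta=\theta(\alpha)>0$ (one expects $\theta$ of order $1/\alpha$), possibly corrected by a lower-order term as in \cite{BCD,GLM}; note $W\ge 0$. At an interior maximum $p_0$ of $W$ one has $\nabla W(p_0)=0$ and $\mathcal{L}W(p_0)\le 0$. Feeding the basic identities into $\mathcal{L}W$ produces (i) a third-order term $\dot F^{ij}\ddot F^{kl,pq}\nabla_i h_{kl}\nabla_j h_{pq}$; (ii) a Kato-type term comparing $|\nabla^2 F|$ with $|\nabla F|$, controlled at $p_0$ via $\nabla W(p_0)=0$; and (iii) a reaction term coming from the self-similar equation $\langle X,\nu\rangle=F^{\alpha}$, in which the hypothesis $\alpha>1$ contributes a strictly favorable sign. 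The decisive structural input is the inverse concavity of $F$, i.e. the concavity of $f_*$: through the properties of inverse concave functions assembled in Section \ref{sec2} --- in particular the inequality it forces on $\ddot f$, which for functions satisfying Condition \ref{condtn} is the (weaker) substitute for both Condition \ref{condtn2} and \eqref{eq:fkkappak} exploited in \cite{GLM}, cf. Lemma \ref{lem:condn2} --- one bounds term (i) from below and absorbs the cross terms. Balancing (i)--(iii) for the right $\theta$ should yield, at $p_0$, an inequality $\mathcal{L}W(p_0)\ge c\,W(p_0)$ with $c>0$; together with $\mathcal{L}W(p_0)\le 0$ this forces $W(p_0)\le 0$, hence $W\equiv 0$, i.e. $\nabla F\equiv 0$, and we are done.

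I expect the balancing in (i)--(iii) to be the main obstacle. Without Condition \ref{condtn2}, only the inverse-concavity inequality is available, and it is not ``diagonal friendly'' --- it involves the off-diagonal divided differences $(\dot f_i-\dot f_j)/(\kappa_i-\kappa_j)$ --- so the gradient terms $\nabla_i h_{jk}$ do not split in any obvious way; choosing $\theta$ and regrouping so that, after invoking inverse concavity and $\alpha>1$, every surviving term has a controllable sign is the delicate step. (An alternative that may be technically cleaner, since inverse concavity of $f$ is literally concavity of $f_*$, is to reparametrize $M^n$ by its Gauss map: with $u$ the support function and $r_{ij}=\overline{\nabla}_i\overline{\nabla}_j u+u\,\delta_{ij}$ the principal-radii matrix in an orthonormal frame on $\mathbb{S}^n$, equation \eqref{maine} reads $f_*(r_{ij})=u^{-1/\alpha}$ with $f_*$ concave and $1/\alpha<1$, and one runs the same maximum principle for the now-concave operator $\frac{\partial f_*}{\partial r_{ij}}\,\overline{\nabla}_i\overline{\nabla}_j$ to show that $u$ is constant.)
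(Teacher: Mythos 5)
Your reduction at the end (if $F\equiv c$ then $h_i^{\,j}\langle X,\partial_j\rangle=0$, hence $X=c^{\alpha}\nu$ and $M^n$ is the sphere of radius $c^{\alpha}$) is correct, and so is the identity $\mathcal{L}\langle X,\nu\rangle=\alpha\langle X,\nu\rangle\bigl(1-F^{\alpha-1}\dot F^{ij}(h^2)_{ij}\bigr)$. But the heart of your argument --- the claim that for some exponent $\theta$ the quantity $W=\dot F^{ij}\nabla_iF\nabla_jF/F^{\theta}$ satisfies $\mathcal{L}W\ge c\,W$ with $c>0$ at an interior maximum --- is never derived; you yourself flag the balancing of the third-order, Kato and reaction terms as the main obstacle, and that balancing is precisely the content of the theorem. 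There is no evidence such a pure gradient quantity works: inverse concavity gives a lower bound on $\ddot F$ only in combination with the zero-order terms $\dot f^k/\kappa_k$ (inequality \eqref{inv-conc-1}), and the ``strictly favorable sign'' you attribute to $\alpha>1$ in the reaction term is asserted, not computed. The successful arguments (the paper's, and those of \cite{BCD,GLM,CG19} it builds on) do not try to prove $\nabla F\equiv 0$ directly; they apply the maximum principle to curvature-pinching quantities corrected by a multiple of $|X|^2$, namely $W=F^{\alpha}/\kappa_{\min}-\frac{\alpha-1}{2\alpha}|X|^2$ and $Z=F^{\alpha}\,\frac{\abs{b}^2}{\tr b}-\frac{\alpha-1}{2\alpha}|X|^2$, where the precise coefficient $\frac{\alpha-1}{2\alpha}$ is what makes the gradient and reaction terms cancel (via $\nabla_i\Phi=b^{ij}\nabla_jF^{\alpha}$); the first maximum-principle step yields umbilicity and $\nabla F^{\alpha}=0$ only at the maximum point, and constancy is then propagated by the strong maximum principle applied to $Z$ near that point. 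Your proposal contains no analogue of this correction term, so the structure that makes the argument close is missing.

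Your parenthetical alternative (Gauss-map parametrization, $f_*(r_{ij})=u^{-1/\alpha}$ with $f_*$ concave, ``run the same maximum principle to show $u$ is constant'') is a restatement of the problem rather than a proof: uniqueness of constant solutions to that concave equation with exponent $-1/\alpha\in(-1,0)$ is exactly Theorem \ref{main} in dual form, and no test function or inequality is offered for it. As it stands the proposal is a plausible research plan with the decisive estimate left open, not a proof.
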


\begin{rem}
(i) The assumption of $F$ in Theorem \ref{main} is weaker than that in Theorem \ref{glmthm}. The result in Theorem \ref{main} covers previous results in \cite{ChenJFA},\cite{CG19} and can also be regarded as a generalization of Theorem \ref{Th:m} and Theorem \ref{glmthm}.
(ii) Similar to the proof of Theorem 1.12 in  \cite{GLM}, we can consider a slightly more general equation
\begin{equation}\label{neweq}
F^{\alpha}+C=\langle X, \nu\rangle
\end{equation}
where $C\leq 0$ is a constant. It can be proved by an argument analogous to that of Theorem \ref{main} that $M^n$ must be a round sphere if we replace the self-similar equation
\eqref{maine} by \eqref{neweq}.
\end{rem}

\subsection{Self-similar solutions to contracting curvature flows in  the hemisphere}\label{sec1.2}
Self-similar equation can be extended to more general spaces like warped product manifolds, see \cite{GaoMa-19} and its references. In \cite{GaoMa-19}, the uniqueness of self-similar solutions in the hemisphere were obtained when the speed function of the corresponding curvature flow  satisfies Condition 1.8 in \cite{GLM}. In general, self-similar solutions to curvature flows in warped product manifolds (other than Euclidean space) do not arise from blow-up procedures, however, they provide barriers and the knowledge of them will be important in
the study of the asymptotical behavior of the corresponding curvature flows.

Let $N=[0,\bar{r}	)\times \mathbb{S}^{n}$ be a warped product manifold with warped product metric 
$$\bar{g}=dr\otimes dr+\lambda^{2}(r)\sigma,$$
where $\sigma$ denotes the standard metric of $\mathbb{S}^{n}$.
 In the sequel, we consider the case that $N$ has constant sectional curvature $\epsilon$, that is, when $\lambda=r$ and $\bar{r}	=\infty$, $N$ is Euclidean space $\mathbb{R}^{n+1}$ with constant sectional curvature $\epsilon=0$; when $\lambda=\sin r$ and $\bar{r}	=\frac{\pi}{2}$, $N$ is the hemisphere $\mathbb{S}^{n+1}_{+}$ with constant sectional curvature $\epsilon=1$; when $\lambda=\sinh r$ and $\bar{r}	=\infty$, $N$ is  hyperbolic space $\mathbb{H}^{n+1}$ with constant sectional curvature $\epsilon=-1$. Let $M^n$ be a closed strictly convex hypersurface in $N$, the corresponding self-similar equation is defined by
\begin{equation}\label{Eq:ssN2}
	F^{\alpha}=\bar{g}(\lambda\partial_{r},\nu),
\end{equation}
where $\nu$ is the outward unit normal vector field of $M^n$, $\alpha(\geq 1)$ is a constant  and
$F(\mathcal{W})=f(\kappa(\mathcal{W}))$ is a smooth, symmetric function of the principal curvatures
$
(\kappa_1,\ldots,\kappa_n)
$
of $M^n$.

We prove analogues of Theorem \ref{Th:m} and Theorem \ref{main} in the hemisphere $\mathbb{S}^{n+1}_{+}$.

\begin{thm}\label{Th:hemisphere}
 Let $M^n$ be a closed strictly convex hypersurface in  $\mathbb{S}^{n+1}_{+}$ with $n\geq 2$, satisfying \eqref{Eq:ssN2}.
If $\alpha\geq 1$,  $F$ satisfies Condition \ref{condtn} and is inverse concave, then $M^n$ is a slice $\{ r_{0} \}\times \mathbb{S}^{n}$ in $\mathbb{S}^{n+1}_{+}$.
\end{thm}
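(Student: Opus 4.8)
The plan is to carry over to the warped‑product setting of $\mathbb{S}^{n+1}_{+}$ the maximum‑principle scheme behind Theorem~\ref{Th:m} and Theorem~\ref{main}, exploiting that $\mathbb{S}^{n+1}_{+}$ has constant sectional curvature $\epsilon=1$. The structural object one needs is the closed conformal vector field $X:=\lambda\partial_{r}=\sin r\,\partial_{r}$, which satisfies $\bar\nabla_{Y}X=\lambda'(r)\,Y$ with $\lambda'=\cos r$. On the closed strictly convex hypersurface $M^{n}$ set $u:=\bar g(X,\nu)$, so that \eqref{Eq:ssN2} becomes $F^{\alpha}=u$, and $\varphi:=\Phi|_{M}$ with $\Phi(r)=\int_{0}^{r}\lambda\,ds=1-\cos r$, so that $\bar\nabla\Phi=X$ and $\nabla\varphi=X^{\top}$. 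Since Condition~\ref{condtn}(i) forces $F>0$ we have $u>0$, hence $M$ is star‑shaped about the pole $r=0$ and the quantities below all make sense. One then records the structural identities
\begin{equation}
\nabla_{i}\nabla_{j}\varphi=\lambda'\,g_{ij}-u\,h_{ij},\qquad \nabla_{i}u=h_{i}{}^{k}\nabla_{k}\varphi,\qquad |\nabla\varphi|^{2}=\lambda^{2}-u^{2},
\end{equation}
together with $\lambda'=1-\varphi$, $\lambda''=-\lambda$ and $\lambda^{2}=\varphi(2-\varphi)$, so that every scalar below is a function of $\varphi$, $u$ and the Weingarten map $\mathcal{W}$ alone. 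Compared with the Euclidean computation (where $\lambda'\equiv1$ and $\epsilon=0$), the only new features are the $\varphi$‑dependent factor $\lambda'$ in the Hessian of $\varphi$ and the curvature terms produced when covariant derivatives are commuted.

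Next I would derive the two second‑order identities on which the argument rests. Put $G:=F^{\alpha}$, $\dot G^{ij}:=\partial G/\partial h_{ij}$, and $\mathcal{L}:=\dot G^{ij}\nabla_{i}\nabla_{j}$; homogeneity gives $\dot G^{ij}h_{ij}=\alpha G=\alpha u$, and contracting the Hessian identity for $\varphi$ yields
\begin{equation}
\mathcal{L}\varphi=\lambda'\,\dot G^{ij}g_{ij}-\alpha u^{2}.
\end{equation}
Differentiating $G(\mathcal{W})=u$ twice, and using the Simons identity in the space form $\mathbb{S}^{n+1}_{+}$ (whose curvature terms carry the favourable sign $\epsilon=1$), the Codazzi equations and $\nabla u=h\cdot\nabla\varphi$, one obtains a formula of the shape
\begin{equation}
\mathcal{L}u=\langle W,\nabla u\rangle+\dot G^{ij}\ddot G^{kl,pq}\nabla_{i}h_{kl}\,\nabla_{j}h_{pq}+Q(\varphi,u,\mathcal{W}),
\end{equation}
for an explicit drift field $W$ and an explicit zeroth‑order remainder $Q$. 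The second‑order term is where the hypotheses enter: from $\ddot G=\alpha F^{\alpha-1}\ddot F+\alpha(\alpha-1)F^{\alpha-2}\,\dot F\otimes\dot F$ (whose last summand is $\ge 0$ since $\alpha\ge 1$) together with the inverse‑concavity inequality from Section~\ref{sec2} (cf. Lemma~\ref{lem:condn2}), namely
\begin{equation}
\sum_{k,l}\frac{\partial^{2}f}{\partial\kappa_{k}\partial\kappa_{l}}\,y_{k}y_{l}+2\sum_{k}\frac{1}{\kappa_{k}}\frac{\partial f}{\partial\kappa_{k}}\,y_{k}^{2}\ \ge\ \frac{2}{f}\Big(\sum_{k}\frac{\partial f}{\partial\kappa_{k}}\,y_{k}\Big)^{2}
\end{equation}
(and its matrix version for non‑diagonal arguments), one gets a lower bound of the form $\dot G^{ij}\ddot G^{kl,pq}\nabla_{i}h_{kl}\nabla_{j}h_{pq}\ge\tfrac{\alpha+1}{\alpha u}\,\dot G^{ij}\nabla_{i}u\,\nabla_{j}u-(\text{curvature-weighted gradient terms})$, the subtracted terms being absorbable into the remaining pieces of $\mathcal{L}u$. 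This is exactly where inverse concavity takes over the role played in \cite{GLM} by the stronger Condition~\ref{condtn2}.

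With these identities in hand I would then introduce, following the test functions of Choi-Daskalopoulos \cite{c-d} and Brendle-Choi-Daskalopoulos \cite{BCD} as modified in \cite{GLM} and in the proof of Theorem~\ref{main}, an auxiliary function $Z$ assembled from $u$, $\varphi$, the $\dot G$‑weighted quadratic forms $\dot G^{ij}\nabla_{i}\varphi\,\nabla_{j}\varphi$ and $\dot G^{ij}\nabla_{i}u\,\nabla_{j}u$, and — since $f$ is only assumed inverse concave — the inverse Weingarten map $\mathcal{W}^{-1}$, all normalized by suitable powers of $u$ and of $\lambda'=1-\varphi$. The aim is to show that at an interior maximum point $Z$ satisfies an elliptic inequality $\mathcal{L}Z\ge\langle\widetilde W,\nabla Z\rangle+(\text{terms that are manifestly nonnegative})$, the nonnegativity being guaranteed by the inverse‑concavity inequality above, by $\alpha\ge 1$, and by the favourable sign of the $\epsilon=1$ curvature terms; I expect these last two inputs to be what lets the single range $\alpha\ge 1$ be treated uniformly here, instead of splitting into the cases $\alpha=1$ (Theorem~\ref{Th:m}) and $\alpha>1$ (Theorem~\ref{main}) as in the flat case. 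The strong maximum principle then forces $Z$ to be constant, equality propagates through the intermediate inequalities, and this forces $M^{n}$ to be totally umbilic; by the Codazzi equation (here $n\ge 2$ is used) the umbilicity factor is constant, so $M^{n}$ is a geodesic sphere of $\mathbb{S}^{n+1}_{+}$, and re‑inserting this into \eqref{Eq:ssN2} forces its centre to be the pole $r=0$, i.e. $M^{n}=\{r_{0}\}\times\mathbb{S}^{n}$ for some $r_{0}\in(0,\tfrac{\pi}{2})$.

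The step I expect to be the main obstacle is the construction of $Z$ and the verification of the elliptic inequality in the presence of the warping function: in $\mathbb{R}^{n+1}$ one has $\lambda'\equiv 1$, $\lambda''\equiv 0$ and $\epsilon=0$, so many terms in $\mathcal{L}u$ and $\mathcal{L}\varphi$ simply collapse, whereas here the coefficients $\lambda'=1-\varphi$, $\lambda''=-\lambda$ and $\lambda$ persist throughout and have to be balanced — against each other, against the inverse‑concavity inequality, and against the $\epsilon=1$ curvature contributions — so that the remainder of $\mathcal{L}Z$, after the gradient term $\langle\widetilde W,\nabla Z\rangle$ is extracted, still has a definite sign. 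Pinning down the exact normalization of $Z$ (the precise powers of $u$ and of $1-\varphi$, and the precise way the two quadratic forms are combined with $\mathcal{W}^{-1}$) so that this remainder is nonnegative is the delicate bookkeeping; once that is settled, the remaining steps run parallel to the proofs of Theorems~\ref{Th:m} and \ref{main}.
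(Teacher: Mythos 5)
Your outline correctly identifies the general strategy (adapt the Choi--Daskalopoulos/Brendle--Choi--Daskalopoulos test-function maximum principle to the warped product $\mathbb{S}^{n+1}_{+}$, using $u=\bar g(\lambda\partial_r,\nu)=F^{\alpha}$, $\Phi$, and the favourable sign of $\epsilon=1$), but it stops exactly where the proof actually lives. You never write down the auxiliary quantity: you say $Z$ should be ``assembled from $u$, $\varphi$, the $\dot G$-weighted quadratic forms and $\mathcal{W}^{-1}$, normalized by suitable powers of $u$ and $1-\varphi$,'' and you concede that pinning down this normalization and verifying the elliptic inequality is ``delicate bookkeeping'' left unresolved. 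That bookkeeping is the theorem. In the paper the choice is completely explicit and does not involve weighted gradient forms or powers of $\lambda'$ at all: one works with the pair $W=\frac{F^{\alpha}}{\kappa_{\min}}-\frac{\alpha-1}{\alpha}\Phi$ and $Z=F^{\alpha}\frac{\abs{b}^{2}}{\tr b}-\frac{\alpha-1}{\alpha}\Phi$, uses $W$ to locate a maximum point and prove (Lemma \ref{Th:wmax}) that it is umbilic with $\nabla F^{\alpha}=0$ there --- this is precisely where inverse concavity replaces Condition \ref{condtn2}, via \eqref{inv-conc-1} and \eqref{s2:inv-conc} --- and only then applies the strong maximum principle to the smooth quantity $Z$ in a neighbourhood of that umbilic point, where $\partial G/\partial b^{kl}$ is positive definite. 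Without producing a concrete $Z$ and the sign computation (the decomposition $L_1+L_2+L_3$, where the $\epsilon$-term in $L_1$ is shown nonnegative using $(\dot f^i\kappa_i^2-\dot f^j\kappa_j^2)(\kappa_i-\kappa_j)\ge 0$), your argument has no content at the decisive step.

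A second concrete gap: you assert that the sign of $\epsilon=1$ should let the whole range $\alpha\ge 1$ be handled uniformly by this scalar strong-maximum-principle argument. In the scalar framework this is not substantiated and in fact fails as stated: when $\alpha=1$ the $\Phi$-term disappears and the strictly positive gradient coefficient $\frac{(n-1)(\alpha-1)}{n\alpha}\kappa_1^{-2}$ used in the proof of Theorem \ref{main} degenerates, so the $\alpha=1$ case needs a separate argument. The paper supplies one: normalizing $F(I)=1$, it writes $Z=FG=g(\mu)/f_*(\mu)$ with $\mu_i=\kappa_i^{-1}$, uses concavity of $f_*$ (inverse concavity of $F$) and convexity of $g$ together with Lemma \ref{lem-conc} to get $Z\ge 1$ globally, while Lemma \ref{Th:wmax} gives $Z_{\max}=1$ at the maximum point of $W$, forcing $Z\equiv 1$ and hence umbilicity. (A uniform treatment of $\alpha\ge1$ does exist, but via the tensor maximum principle applied to $T_{kl}=F^{\alpha}b^{kl}-\frac{\alpha-1}{\alpha}\Phi g^{kl}-\beta g^{kl}$ as in Section \ref{sec:proof2}, which is a different mechanism from the scalar one you sketch.) Finally, the passage from constancy to the slice is cleaner than your ``umbilic $\Rightarrow$ geodesic sphere $\Rightarrow$ centered at pole'' route: constancy of $W$ forces $b^{ij}\nabla_j F^{\alpha}=\bar g(\lambda\partial_r,e_i)=0$, i.e.\ $\lambda\partial_r\parallel\nu$, so $r$ is constant on $M^n$ and $M^n=\{r_0\}\times\mathbb{S}^n$ directly.
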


\subsection{Outline of the proofs and organization of the paper}\label{sec1.3}
In Section \ref{sec2}, we give some notations and preliminary results. We give two different proofs of Theorem \ref{main} and Theorem \ref{Th:hemisphere}.
Section \ref{Sec:Z} is devoted to the first proof of Theorem \ref{main}. We introduce an auxiliary quantity
\begin{equation*}
Z=F^{\alpha}\cdot\frac{\abs{b}^{2}}{\tr b}-\frac{\alpha-1}{2\alpha}\abs{X}^{2},
\end{equation*}
where $b$ is the inverse of the second fundamental form $h$. The advantage of $Z$ is that we can apply strong maximum principle to $Z$ near a umbilical point of $M^n$ for $F$ with only inverse concavity. In Section \ref{sec3.1}, we obtain some equations involving the quantity $Z$ and establish some crucial estimates. To use the strategy of Brendle-Choi-Daskaspoulos \cite{BCD} (also see \cite{c-d}), we  need to analyze $W=\frac{F^{\alpha}}{\kappa_{\min}}-\frac{\alpha-1}{2\alpha}\abs{X}^{2}$ at its maximum point where $\kappa_{\min}$ is the smallest principal curvature. 
Conditions of $F$ in Theorem \ref{main} are enough for us to accomplish this part, which is inspired by \cite{CG19}. Thus, we can prove Theorem \ref{main}.
In Section \ref{Sec:hemisphere}, we give the proof of Theorem \ref{Th:hemisphere}  by using some modifications of the proof of Theorem \ref{main}.

In Section \ref{sec:proof2}, we give an alternative proof of Theorem \ref{main} and Theorem \ref{Th:hemisphere}.
This is achieved by  applying the  maximum principle to the tensor
\begin{equation*}
	T_{kl}= F^\alpha  b^{kl}-\frac{\alpha-1}{\alpha}\Phi g^{kl}-\beta g^{kl},
\end{equation*}
where $(b^{kl})$  is the inverse matrix of $(h_{kl})$, $\Phi(r)=\int_{0}^{r}\lambda(s)ds$, $g^{kl}$ is the inverse of the metric and $\beta$ is a constant.
This kind of maximum principle was used by McCoy in \cite{McCoy11} (cf. \cite{BAndrews07,A-M-Z}) to prove Theorem \ref{Th:m}.

\textbf{Acknowledgments:}
The authors would like to thank Professor James McCoy for
his interest and valuable comments.
H. Li  was supported by NSFC Grant  No.11831005 and NSFC-FWO 11961131001. X. Wang was supported by NSFC Grant No.11971244, Natural Science Foundation of Tianjin, China (Grant No.19JCQNJC14300) and the Fundamental Research Funds for the Central Universities, and she would also like to express her deep gratitude to the Mathematical Sciences Institute at the Australian National University for its hospitality and to Professor  Ben Andrews for his encouragement and help during her stay in MSI of ANU as a Visiting Fellow, while part of this work was completed. The authors would also like to thank the referee  for the  valuable comments and suggestions.

\section{Notations and preliminaries}
\label{sec2}

Throughout this paper, repeated indices will be summed unless otherwise stated.

Let $N=[0,\bar{r}	)\times \mathbb{S}^{n}$ be a warped product manifold with metric $\bar{g}=dr\otimes dr+\lambda^{2}(r)\sigma$ which has constant sectional curvature $\epsilon$. Let $M^n$ be a closed strictly convex hypersurface in $N$. Recall that the self-similar equation is given by 
\begin{equation}\label{Eq:ssN}
F^{\alpha}=\bar{g}(\lambda\partial_{r},\nu).
\end{equation}

Suppose that $\{ e_{1}, e_{2},\ldots,e_{n} \}$ is an orthonormal frame on $M^n$. 
Let  $
(\kappa_1,\ldots,\kappa_n)
$ be the principal curvatures
of $M^n$ which are the eigenvalues of the second fundamental form  $h=(h_{ij})$ on $M^n$.  We use $\nabla$ to denote the Levi-Civita connection on $M^n$. For convenience, we denote $\nabla_{k}h_{ij}=h_{ijk}$. 
We use $\LF$ to denote the operator $\LF=\alpha F^{\alpha-1}\frac{\partial F}{\partial h_{ij}}\nabla_{i}\nabla_{j}$. Under Condition \ref{condtn}, $F>0$ and $(\frac{\partial F}{\partial h_{ij}})$ is positive definite, thus $\LF$ is an elliptic operator. We will use the operator $\LF$ to establish some basic equations in Section \ref{Sec:Z}.
Denote $\Phi(r)=\int_{0}^{r}\lambda(s)ds$. We remark that $\Phi=\frac{\abs{X}^{2}}{2}$ in Euclidean space $\mathbb{R}^{n+1}$, where $X$ is the position vector.

In the following, we recall some basic properties of symmetric functions on $M^n$.

Given a smooth symmetric and positive function $f$ on the positive cone $\Gamma_+\subset \mathbb{R}^n$, a result of Glaeser \cite{Gla} implies that there is a smooth $GL(n)$-invariant function $F$ on the space $\mathrm{Sym}(n)$ of symmetric matrices such that $f(\kappa(A))=F(A)$, where $\kappa(A)=(\kappa_1,\ldots,\kappa_n)$ are the eigenvalues of $A$. We denote by $\dot{F}^{ij}$ and $\ddot{F}^{ij,kl}$ the first and second derivatives of $F$ with respect to the components of its argument.  We also use the notations $\dot{f}^i(\kappa)$, $\ddot{f}^{ij}(\kappa)$ to denote the derivatives of $f$ with respect to $\kappa$. At any diagonal $A$, we have
\begin{equation*}
  \dot{F}^{ij}(A)=\dot{f}^i(\kappa(A))\delta_i^j.
\end{equation*}
If the eigenvalues of $A$ are mutually different, the second derivative $\ddot{F}$ of $F$ in direction $B\in \mathrm{Sym}(n)$ is given in terms of $\dot{f}$ and $\ddot{f}$ by  (see e.g., \cite{BAndrews07}):
\begin{equation}\label{F-dd}
  \ddot{F}^{ij,kl}(A)B_{ij}B_{kl}=\sum_{i,k}\ddot{f}^{ik}(\kappa(A))B_{ii}B_{kk}+2\sum_{i>k}\frac{\dot{f}^i(\kappa(A))-\dot{f}^k(\kappa(A))}{\kappa_i(A)-\kappa_k(A)}B_{ik}^2.
\end{equation}
This formula makes sense as a limit in the case of  $\kappa_i=\kappa_k$. We have the following properties for concave functions. See e.g., \cite{BAndrews07,A-M-Z} for the proof.
\begin{lem}\label{lem-conc}
A smooth symmetric function $F$ on $\mathrm{Sym}(n)$ is concave in $A$ if and only if $f$ is concave in $\kappa(A)$ and $(\dot{f}^i-\dot{f}^k)(\kappa_i-\kappa_k)\leq 0$ for any $i\neq k$. If $f$ satisfies Condition \ref{condtn} and is concave, then
\begin{equation}\label{conc}
  \sum_{i}\dot{f}^{i}\geq f(1,\ldots,1),\qquad \ f\leq \frac{f(1,\ldots,1)}n\sum_i\kappa_i.
\end{equation}
\end{lem}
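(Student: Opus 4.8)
\textbf{Proof proposal for Lemma \ref{lem-conc}.}

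The first assertion (the characterization of concavity) is classical, so I would only sketch it. Fix a diagonal $A$ with eigenvalues $\kappa=(\kappa_1,\dots,\kappa_n)$. Concavity of $F$ at $A$ means $\ddot F^{ij,kl}(A)B_{ij}B_{kl}\le 0$ for every symmetric $B$. Testing with $B$ diagonal gives, via formula \eqref{F-dd}, $\sum_{i,k}\ddot f^{ik}(\kappa)B_{ii}B_{kk}\le 0$, i.e. concavity of $f$ at $\kappa$; testing with $B$ having a single nonzero off-diagonal pair $B_{ik}=B_{ki}$ gives $\dfrac{\dot f^i-\dot f^k}{\kappa_i-\kappa_k}B_{ik}^2\le 0$, i.e. $(\dot f^i-\dot f^k)(\kappa_i-\kappa_k)\le0$. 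Conversely, if both conditions hold then \eqref{F-dd} exhibits $\ddot F^{ij,kl}(A)B_{ij}B_{kl}$ as a sum of two nonpositive terms, so $F$ is concave at every diagonal $A$; since $F$ is $GL(n)$-invariant and every symmetric matrix is orthogonally diagonalizable, $F$ is concave on all of $\mathrm{Sym}(n)$. (For the limiting case $\kappa_i=\kappa_k$ one interprets the difference quotient as $\ddot f^{ii}-\ddot f^{ik}$, which is controlled by concavity of $f$.)

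For the two inequalities in \eqref{conc}, assume Condition \ref{condtn} and concavity of $f$. For the first, use concavity of $f$ at $\kappa$ compared against the point $(1,\dots,1)$: concavity gives $f(1,\dots,1)\le f(\kappa)+\sum_i \dot f^i(\kappa)(1-\kappa_i)$. By Euler's relation (homogeneity of degree $1$, Condition \ref{condtn}(iii)), $\sum_i\dot f^i(\kappa)\kappa_i=f(\kappa)$, so the right-hand side collapses to $\sum_i\dot f^i(\kappa)$, yielding $\sum_i\dot f^i\ge f(1,\dots,1)$. For the second inequality, apply concavity in the other direction: compare $f$ at the point $\tfrac1n(\sum_j\kappa_j)(1,\dots,1)=:\bar\kappa\,(1,\dots,1)$ against the point $\kappa$. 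Concavity gives $f(\kappa)\le f(\bar\kappa,\dots,\bar\kappa)+\sum_i\dot f^i(\bar\kappa,\dots,\bar\kappa)(\kappa_i-\bar\kappa)$. Since $\dot f^i$ is homogeneous of degree $0$ and symmetric, $\dot f^i(\bar\kappa,\dots,\bar\kappa)=\dot f^1(1,\dots,1)$ is independent of $i$, and $\sum_i(\kappa_i-\bar\kappa)=0$; also $f(\bar\kappa,\dots,\bar\kappa)=\bar\kappa f(1,\dots,1)$ by homogeneity. Hence $f(\kappa)\le \bar\kappa f(1,\dots,1)=\frac{f(1,\dots,1)}{n}\sum_i\kappa_i$, which is the claim.

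The only genuine subtlety, and the step I would be most careful about, is the reduction in the first part from ``concave at every diagonal matrix'' to ``concave on $\mathrm{Sym}(n)$'': one must know that $F$ extends smoothly and $GL(n)$-invariantly off the diagonal (this is Glaeser's theorem, already invoked in the text) and that the Hessian in an arbitrary direction at an arbitrary symmetric matrix can be computed after an orthogonal change of basis that diagonalizes $A$, which is exactly what \eqref{F-dd} encodes. Everything else — Euler's relation, degree-$0$ homogeneity of $\dot f^i$, and symmetry — is routine bookkeeping with Condition \ref{condtn}. Since the lemma is cited to \cite{BAndrews07,A-M-Z}, in the paper it suffices to give the two short convexity-inequality arguments above and refer to those sources for the characterization of concavity.
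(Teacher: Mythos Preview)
Your proposal is correct. The paper itself does not prove this lemma at all: it simply states the result and refers the reader to \cite{BAndrews07,A-M-Z}, so there is no ``paper's own proof'' to compare against. Your argument for the characterization of concavity via \eqref{F-dd} is the standard one (and is essentially what appears in the cited references), and your derivations of the two inequalities in \eqref{conc} from the supporting-hyperplane inequality combined with Euler's relation are clean and complete.
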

The examples of concave symmetric functions include: (i) $E_k^{1/k}$; (ii)  $(E_k/{E_l})^{1/{(k-l)}}$ with $k>l$, where
\begin{equation*}
  E_k=\binom{n}{k}^{-1}\sigma_k(\kappa)=\binom{n}{k}^{-1}\sum_{1\leq i_1<\cdots<i_k\leq n}\kappa_{i_1}\cdots\kappa_{i_k};
\end{equation*}
and (iii) the power means $  H_r=(\sum_{i=1}^n\kappa_i^r)^{1/r} $ with $r\leq 1$. Taking convex combinations or geometric means of the above concave examples can produce more concave examples.

For any positive definite symmetric matrix $A\in \mathrm{Sym}(n)$, we define $F_*(A)=F(A^{-1})^{-1}$. Then $F_*(A)=f_*(\kappa(A))$, where $f_*$ is the dual function of $f$ defined in \eqref{s1:f*}.
We say that a symmetric function $F$ is inverse-concave if $F_*(A)$ is concave. The following lemma characterizes the inverse concavity of $f$ and $F$ (see \cite{BAndrews07,A-M-Z,A-W}).
\begin{lem}\label{inv-concave}
\begin{itemize}
  \item[(i)] $F_*$ is concave on $\Gamma_+$ if and only if $f_*$ is concave on $\Gamma_+$.
   \item[(ii)] $f$ is inverse concave if and only if
 \begin{equation}\label{inv-conc-1}
   \sum_{k,l=1}^n\ddot{f}^{kl}y_ky_l+2\sum_{k=1}^n\frac {\dot{f}^k}{\kappa_k}y_k^2~\geq ~2f^{-1}(\sum_{k=1}^n\dot{f}^ky_k)^2
 \end{equation}
 for any $y=(y_1,\ldots,y_n)\in \mathbb{R}^n$. Moreover, if $f$ is inverse concave, then
\begin{equation}\label{s2:inv-conc}
\frac{\dot{f}^k-\dot{f}^l}{\kappa_k-\kappa_l}+\frac{\dot{f}^k}{\kappa_l}+\frac{\dot{f}^l}{\kappa_k}\geq~0, ~\left(\dot{f}^k\kappa_k^2-\dot{f}^l \kappa_{l}^2\right)(\kappa_k-\kappa_l)\geq 0,\quad \forall~k\neq l.
\end{equation}
 \item[(iii)] If $f$ satisfies Condition \ref{condtn}  and is inverse concave, then
\begin{equation}\label{s2:ic-2}
	\sum_{k=1}^n \dot{f}^k\kappa_k^2\geq~ f^2/f(1,\ldots,1).
\end{equation}
\end{itemize}
\end{lem}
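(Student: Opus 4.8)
The plan is to deduce all three parts from two ingredients that are already available: the concavity criterion of Lemma~\ref{lem-conc}, and the elementary chain rule for the dual function $f_*(\kappa)=1/f(1/\kappa)$ from \eqref{s1:f*}. Throughout I write $\mu_i=1/\kappa_i$, so that $\mu$ runs over all of $\Gamma_+$ as $\kappa$ does, and I record that $\partial\mu_i/\partial\kappa_i=-\mu_i^2$ yields $\dot{f}_*^i(\kappa)=\mu_i^2\dot{f}^i(\mu)/f(\mu)^2$.

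For part (i) I apply Lemma~\ref{lem-conc} to the pair $(F_*,f_*)$: it states that $F_*$ is concave if and only if $f_*$ is concave and $(\dot{f}_*^k-\dot{f}_*^l)(\kappa_k-\kappa_l)\le 0$ for all $k\ne l$. Since the criterion is pointwise it applies equally on the cone of positive definite matrices, so it suffices to observe that this last condition is automatic once $f_*$ is concave and symmetric. Indeed, for fixed $k\ne l$ the function $t\mapsto f_*\big(\ldots,\kappa_k+t(\kappa_l-\kappa_k),\ldots,\kappa_l+t(\kappa_k-\kappa_l),\ldots\big)$ is concave on $[0,1]$ and, by symmetry of $f_*$, takes equal values at $t=0$ and $t=1$; hence its derivative at $t=0$, namely $(\dot{f}_*^k-\dot{f}_*^l)(\kappa_l-\kappa_k)$, is nonnegative. (Alternatively one invokes Davis's theorem directly.)

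For part (ii), write $\log f_*(\kappa)=-\log f(\mu)$ and differentiate twice in $\kappa$; this expresses $\ddot{f}_*^{ij}(\kappa)$ through $f,\dot{f},\ddot{f}$ at $\mu$, the contributions being the Hessian of $\log f$ at $\mu$ weighted by $\mu_i^2\mu_j^2$ and an extra diagonal term from differentiating $\kappa_i^{-2}$. After the substitution $z_i=\mu_i^2 y_i$, an invertible linear change of the test vector for each fixed $\mu$, the concavity inequality $\sum_{ij}\ddot{f}_*^{ij}(\kappa)y_iy_j\le 0$ becomes
\begin{equation*}
\sum_{i,j}\ddot{f}^{ij}(\mu)z_iz_j+2\sum_i\frac{\dot{f}^i(\mu)}{\mu_i}z_i^2\ \geq\ \frac{2}{f(\mu)}\Big(\sum_i\dot{f}^i(\mu)z_i\Big)^2 .
\end{equation*}
Since $f$ is inverse concave exactly when the former inequality holds for all $\kappa,y$ and $(\kappa,y)\mapsto(\mu,z)$ is a bijection of $\Gamma_+\times\mathbb{R}^n$, this is precisely \eqref{inv-conc-1}, which proves the stated equivalence. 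For the ``moreover'' clause, use part (i): inverse concavity of $f$ is concavity of the matrix function $F_*$, so expanding $\ddot{F}_*$ by \eqref{F-dd} with $F_*$ in place of $F$ and testing against the off-diagonal direction $B=e_k\otimes e_l+e_l\otimes e_k$ forces $(\dot{f}_*^k-\dot{f}_*^l)(\kappa_k-\kappa_l)\le 0$. Inserting $\dot{f}_*^k(\kappa)=\mu_k^2\dot{f}^k(\mu)/f(\mu)^2$ and $\kappa_k-\kappa_l=(\mu_l-\mu_k)/(\mu_k\mu_l)$, and renaming $\mu$ as $\kappa$, turns this into the second inequality of \eqref{s2:inv-conc}; placing $\frac{\dot{f}^k-\dot{f}^l}{\kappa_k-\kappa_l}+\frac{\dot{f}^k}{\kappa_l}+\frac{\dot{f}^l}{\kappa_k}$ over the common denominator $\kappa_k-\kappa_l$ shows its numerator equals $\dot{f}^k\kappa_k^2-\dot{f}^l\kappa_l^2$, so the first inequality of \eqref{s2:inv-conc} is equivalent to the second. (One can also obtain the first inequality directly by carrying the second derivative of matrix inversion, $D^2(A\mapsto A^{-1})[B,B]=2A^{-1}BA^{-1}BA^{-1}$, through the chain rule $F_*=1/F(A^{-1})$ and collecting the off-diagonal contributions.)

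For part (iii), I first note that $f_*$ itself satisfies Condition~\ref{condtn}: it is smooth, symmetric, and positive on $\Gamma_+$; it is increasing because $\dot{f}_*^i(\kappa)=\mu_i^2\dot{f}^i(\mu)/f(\mu)^2>0$ by Condition~\ref{condtn}(ii); and it is homogeneous of degree $1$ since $f_*(s\kappa)=1/f(s^{-1}\mu)=s/f(\mu)=sf_*(\kappa)$. Being also concave by hypothesis, $f_*$ obeys the first inequality of \eqref{conc}, which reads $\sum_i\dot{f}_*^i(\kappa)\ge f_*(1,\ldots,1)=1/f(1,\ldots,1)$; substituting $\dot{f}_*^i(\kappa)=\mu_i^2\dot{f}^i(\mu)/f(\mu)^2$, multiplying by $f(\mu)^2$, and renaming $\mu$ as $\kappa$ gives exactly \eqref{s2:ic-2}. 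The only step carrying real computation is the double differentiation of $\log f_*$ together with the verification that the weighting $z_i=\mu_i^2 y_i$ matches the Hessian inequality for $f_*$ to \eqref{inv-conc-1}; the sole non-elementary external fact needed is Davis's theorem, and even that is avoidable via the transposition-segment argument used in part (i).
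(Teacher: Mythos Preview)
The paper does not supply its own proof of this lemma; it simply states the result and refers the reader to \cite{BAndrews07,A-M-Z,A-W}. Your argument is therefore not competing with anything in the text, and it is correct as written. The chain-rule computation in part~(ii) is exactly the standard one: with $\mu_i=1/\kappa_i$ and $z_i=\mu_i^2 y_i$, the Hessian inequality $\sum_{ij}\ddot{f}_*^{ij}y_iy_j\le 0$ reduces to \eqref{inv-conc-1} evaluated at $\mu$, and since $\kappa\mapsto\mu$ is a bijection of $\Gamma_+$ this gives the equivalence. Your transposition-segment argument in part~(i) is a clean way to avoid citing Davis's theorem, and your derivation of part~(iii) from \eqref{conc} applied to $f_*$ is the expected route.

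One small wording fix: in the ``moreover'' clause you say you place the expression over the common denominator $\kappa_k-\kappa_l$, but the actual common denominator is $\kappa_k\kappa_l(\kappa_k-\kappa_l)$. The algebra you describe is right---the numerator is indeed $\dot f^k\kappa_k^2-\dot f^l\kappa_l^2$ and the extra positive factor $\kappa_k\kappa_l$ in the denominator does not affect the sign---so the conclusion stands; just correct the phrase so the reader is not confused.
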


The examples (i) $E_k^{1/k}$; (ii)  $(E_k/{E_l})^{1/{(k-l)}}$ with $k>l$; and (iii) the power means $H_r$ with $r\geq -1$ are smooth inverse-concave symmetric functions. Also, taking convex combinations or geometric means of the inverse-concave examples can produce more inverse-concave examples.

The following lemma gives some properties of $f$ which satisfies Condition \ref{condtn2}.
\begin{lem}\label{lem:condn2}
\begin{itemize}
\item[(i)] If $f$ satisfies Condition \ref{condtn2}, then, for any $k\neq l$,
\begin{equation}\label{eq:fkkappak}
\left(\dot{f}^k\kappa_{k}-\dot{f}^l \kappa_{l}\right)(\kappa_k-\kappa_l)\geq 0.
\end{equation}
\item[(ii)] If $f$ satisfies Condition \ref{condtn} and Condition \ref{condtn2}, then $f$ is inverse concave.
\end{itemize}
\end{lem}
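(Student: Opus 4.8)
The plan is to treat the two parts by separate elementary observations. For part (ii), I would place Condition \ref{condtn2} side by side with the inverse-concavity criterion \eqref{inv-conc-1} of Lemma \ref{inv-concave}(ii). Rewriting Condition \ref{condtn2} via $\partial_{\kappa_i}\log f=\dot f^i/f$ and $\partial_{\kappa_i}\partial_{\kappa_j}\log f=\ddot f^{ij}/f-\dot f^i\dot f^j/f^2$ and multiplying through by $f>0$, Condition \ref{condtn2} becomes equivalent to
\[
\sum_{i,j}\ddot f^{ij}y_iy_j+\sum_i\frac{\dot f^i}{\kappa_i}y_i^2\ \geq\ \frac1f\Big(\sum_i\dot f^iy_i\Big)^2 .
\]
Since \eqref{inv-conc-1} asks for the same left-hand side but with coefficient $2$ on the last two sums, it suffices to add the extra inequality $\sum_i\frac{\dot f^i}{\kappa_i}y_i^2\geq\frac1f\big(\sum_i\dot f^iy_i\big)^2$. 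This is just Cauchy--Schwarz: using $\dot f^i>0$ and $\kappa_i>0$ from Condition \ref{condtn}, $\big(\sum_i\dot f^iy_i\big)^2\leq\big(\sum_i\tfrac{\dot f^i}{\kappa_i}y_i^2\big)\big(\sum_i\dot f^i\kappa_i\big)$, and $\sum_i\dot f^i\kappa_i=f$ by Euler's identity for the degree-one homogeneous $f$. Adding the two inequalities gives \eqref{inv-conc-1}, hence $f$ is inverse concave by Lemma \ref{inv-concave}(ii). This step should be routine once the two inequalities are lined up.

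For part (i), the key observation is that Condition \ref{condtn2} is exactly convexity of the function $\phi(t_1,\dots,t_n):=\log f(e^{t_1},\dots,e^{t_n})$ on $\mathbb R^n$. To see this I would substitute $y_i=\kappa_iz_i=e^{t_i}z_i$ in the quadratic form of Condition \ref{condtn2}; using $\partial_{t_i}\partial_{t_j}\phi=\kappa_i\kappa_j\,\partial_{\kappa_i}\partial_{\kappa_j}\log f$ for $i\neq j$ and $\partial_{t_i}^2\phi=\kappa_i\partial_{\kappa_i}\log f+\kappa_i^2\partial_{\kappa_i}^2\log f$, the first-order terms $\tfrac1{\kappa_i}(\partial_{\kappa_i}\log f)y_i^2$ cancel against the $\kappa_i\partial_{\kappa_i}\log f$ portion of the diagonal second derivatives, and Condition \ref{condtn2} collapses to $\sum_{i,j}(\partial_{t_i}\partial_{t_j}\phi)z_iz_j\geq0$, i.e.\ $\mathrm{Hess}\,\phi\geq0$.

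Granting this, \eqref{eq:fkkappak} follows from convexity together with the symmetry of $\phi$ (inherited from the symmetry of $f$). Fix $k\neq l$, let $\tau$ be obtained from $t$ by swapping the $k$-th and $l$-th coordinates, and restrict $\phi$ to the segment $s\mapsto t+s(\tau-t)$, $s\in[0,1]$. This restriction is convex with equal endpoint values $\phi(t)=\phi(\tau)$, so its derivative at $s=0$ is nonpositive, which reads $(t_l-t_k)\big(\partial_{t_k}\phi-\partial_{t_l}\phi\big)(t)\leq0$. Since $\partial_{t_i}\phi=\kappa_i\dot f^i/f$, $f>0$, and $t_k-t_l=\log\kappa_k-\log\kappa_l$ has the same sign as $\kappa_k-\kappa_l$, this is precisely $(\dot f^k\kappa_k-\dot f^l\kappa_l)(\kappa_k-\kappa_l)\geq0$.

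I expect part (i) to be the real obstacle. The quadratic-form inequality in Condition \ref{condtn2} mixes first- and second-order data at a single point, whereas \eqref{eq:fkkappak} compares the gradient of $f$ at a point with its value at the $(k,l)$-reflected point; recognizing that the substitution $\kappa_i=e^{t_i}$ turns Condition \ref{condtn2} into plain convexity is what makes the symmetry/segment argument available. After that insight, part (i) is immediate and part (ii) is a one-line Cauchy--Schwarz estimate.
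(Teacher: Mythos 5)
Your proof is correct and follows essentially the same route as the paper: for (ii) the paper likewise rewrites Condition \ref{condtn2} in terms of $\dot f,\ddot f$ and adds the Cauchy--Schwarz/Euler inequality $\sum_k\frac{\dot f^k}{\kappa_k}y_k^2\geq f^{-1}(\sum_k\dot f^ky_k)^2$ to obtain \eqref{inv-conc-1}, and for (i) it likewise identifies Condition \ref{condtn2} with convexity of $\log f(e^{x})$. The only difference is that the paper then concludes \eqref{eq:fkkappak} by citing Lemma 2.20 of \cite{A1994CVPDE}, whereas you prove that step directly via the coordinate-swap/segment convexity argument, which is a valid, self-contained substitute for the citation.
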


\begin{proof}
(i) By direct computation, we know Condition \ref{condtn2} is equivalent to the convexity of $\tilde{f}(x)=\log f(e^{x})$, where $x\in \mathbb{R}^{n}$ and $e^{x}=(e^{x_{1}},...,e^{x_{n}})$. Then we obtain \eqref{eq:fkkappak} by applying Lemma 2.20 in \cite{A1994CVPDE}.\\
(ii) If $f$ satisfies Condition \ref{condtn2}, we know
\begin{equation*}
\sum_{k,l=1}^n\ddot{f}^{kl}y_ky_l+\sum_{k=1}^n\frac {\dot{f}^k}{\kappa_k}y_k^2~\geq ~f^{-1}(\sum_{k=1}^n\dot{f}^ky_k)^2
\end{equation*}
for any $y=(y_1,\ldots,y_n)\in \mathbb{R}^n$. By Cauchy-Schwarz inequality and homogeneity of $f$, we know
\begin{equation*}
\sum_{k=1}^n\frac {\dot{f}^k}{\kappa_k}y_k^2~\geq ~f^{-1}(\sum_{k=1}^n\dot{f}^ky_k)^2.
\end{equation*}
Combining these equations together, we obtain that $f$ satisfies \eqref{inv-conc-1}. This completes the proof.
\end{proof}

\section{Proof of Theorem \ref{main}}\label{Sec:Z}
In this section, we give the first proof of  Theorem \ref{main}.
Let $(b^{ij})$  denote the inverse matrix of $(h_{ij})$ and $G(b)$ be a homogeneous of degree $1$ function of $(b^{ij})$, we have the following basic equations.
\begin{prop}\label{Prop:formula}
For any $M^n$ satisfying \eqref{Eq:ssN}, we have the following equations.
\begin{enumerate}[(1)]
\item \begin{align*}
\LF F^{\alpha}
=\bar{g}( \lambda\partial_{r}, \nabla F^{\alpha})+\alpha\lambda' F^{\alpha}-\alpha F^{2\alpha-1}\frac{\partial F}{\partial h_{ij}}h_{jl}h_{li},
\end{align*}
\item \begin{align*}
\LF h_{kl}&=\bar{g}(\lambda\partial_{r},\nabla h_{kl})+\lambda'h_{kl}-\alpha(\alpha-1)F^{\alpha-2}\nabla_{k}F\nabla_{l}F-\alpha F^{\alpha-1}\frac{\partial^{2} F}{\partial h_{ij}\partial h_{st}}h_{ijk}h_{stl}\\
&\quad -\alpha F^{\alpha-1}\frac{\partial F}{\partial h_{ij}}h_{mj}h_{mi}h_{kl}+(\alpha-1) F^{\alpha}h_{km}h_{ml} \\
&\quad +\epsilon \alpha F^{\alpha-1} \frac{\partial F}{\partial h_{ij}}\Big(h_{il}\delta_{kj}-h_{ij}\delta_{kl}+h_{kl}\delta_{ij}-h_{kj}\delta_{il}\Big),
\end{align*}
\item \begin{align*}
\LF b^{kl}
&=\bar{g}(\lambda\partial_{r},\nabla b^{kl})-\lambda'b^{kl}+\alpha(\alpha-1)F^{\alpha-2}b^{kp}b^{ql}\nabla_{p}F\nabla_{q}F\\
&\quad +\alpha F^{\alpha-1}b^{kp}b^{ql}\frac{\partial^{2} F}{\partial h_{ij}\partial h_{st}}h_{ijp}h_{stq}+\alpha F^{\alpha-1}b^{kl}\frac{\partial F}{\partial h_{ij}}h_{mj}h_{mi}\\
&\quad -(\alpha-1) F^{\alpha}\delta_{kl}+2\alpha F^{\alpha-1}b^{ks}b^{pt}b^{lq}\frac{\partial F}{\partial h_{ij}}h_{sti}h_{pqj} \\
&\quad -\epsilon \alpha F^{\alpha-1} b^{kp}b^{ql} \frac{\partial F}{\partial h_{ij}}\Big(h_{iq}\delta_{pj}-h_{ij}\delta_{pq}+h_{pq}\delta_{ij}-h_{pj}\delta_{iq}\Big),
\end{align*}
\item \begin{align*}
\LF G&=\alpha F^{\alpha-1}\frac{\partial F}{\partial h_{ij}}\frac{\partial^{2} G}{\partial b^{kl}\partial b^{pq}}\nabla_{i}b^{pq}\nabla_{j}b^{kl}+\bar{g}(\lambda\partial_{r},\nabla G)-\lambda'G\\
&\quad +\alpha(\alpha-1)F^{\alpha-2}\frac{\partial G}{\partial b^{kl}}b^{kp}b^{ql}\nabla_{p}F\nabla_{q}F+\alpha F^{\alpha-1}\frac{\partial G}{\partial b^{kl}}b^{kp}b^{ql}\frac{\partial^{2} F}{\partial h_{ij}\partial h_{st}}h_{ijp}h_{stq}\\
&\quad +\alpha F^{\alpha-1}G\frac{\partial F}{\partial h_{ij}}h_{mj}h_{mi}-(\alpha-1) F^{\alpha}\sum_{i}\frac{\partial G}{\partial b^{ii}}+2\alpha F^{\alpha-1}\frac{\partial G}{\partial b^{kl}}b^{ks}b^{pt}b^{lq}\frac{\partial F}{\partial h_{ij}}h_{sti}h_{pqj}\\
&\quad +\epsilon \alpha F^{\alpha-1}\left( F\frac{\partial G}{\partial b^{kl}}b^{kp}b^{pl}-G\sum_{i}\frac{\partial F}{\partial h_{ii}} \right),
\end{align*}
\item \begin{align*}
\LF (F^{\alpha}G)&=2\alpha F^{\alpha-1}\frac{\partial F}{\partial h_{ij}}\nabla_{i}F^{\alpha}\nabla_{j}G+\bar{g}( \lambda\partial_{r},\nabla (F^{\alpha}G) )+\lambda'(\alpha-1) F^{\alpha}G\\
&\quad +\alpha F^{2\alpha-1}\frac{\partial F}{\partial h_{ij}}\frac{\partial^{2} G}{\partial b^{kl}\partial b^{pq}}\nabla_{i}b^{pq}\nabla_{j}b^{kl}+\alpha(\alpha-1)F^{2\alpha-2}\frac{\partial G}{\partial b^{kl}}b^{kp}b^{ql}\nabla_{p}F\nabla_{q}F\\
&\quad +\alpha F^{2\alpha-1}\frac{\partial G}{\partial b^{kl}}b^{kp}b^{ql}\frac{\partial^{2} F}{\partial h_{ij}\partial h_{st}}h_{ijp}h_{stq}-(\alpha-1) F^{2\alpha}\sum_{i}\frac{\partial G}{\partial b^{ii}}\\
&\quad +2\alpha F^{2\alpha-1}\frac{\partial G}{\partial b^{kl}}b^{ks}b^{pt}b^{lq}\frac{\partial F}{\partial h_{ij}}h_{sti}h_{pqj}+\epsilon \alpha F^{2\alpha-1}\left( F\frac{\partial G}{\partial b^{kl}}b^{kp}b^{pl}-G\sum_{i}\frac{\partial F}{\partial h_{ii}} \right),
\end{align*}
\item \begin{align*}
\LF \Phi=\lambda'\alpha F^{\alpha-1}\sum_{i}\frac{\partial F}{\partial h_{ii}}-\alpha F^{2\alpha}.
\end{align*}
\end{enumerate}
\end{prop}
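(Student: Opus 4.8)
The plan is to reduce all six identities to two ingredients: the first and second covariant derivatives of the self-similar equation \eqref{Eq:ssN}, together with the elementary structure of $N$. Write $V=\lambda\partial_{r}$, so that \eqref{Eq:ssN} reads $F^{\alpha}=\bar{g}(V,\nu)$, and let $V^{\top}$ be the component of $V$ tangent to $M^{n}$. Since $N$ has constant sectional curvature $\epsilon$, one has the conformal relation $\bar\nabla_{X}V=\lambda'X$ for every vector $X$, the Codazzi equation (so $h_{ijk}=\nabla_{k}h_{ij}$ is totally symmetric), and the Gauss equation $R_{ikjm}=h_{im}h_{kj}-h_{ij}h_{km}+\epsilon(\delta_{im}\delta_{kj}-\delta_{ij}\delta_{km})$. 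Differentiating $F^{\alpha}=\bar{g}(V,\nu)$ along $M^{n}$ and using $\bar\nabla_{e_{i}}\nu=h_{ik}e_{k}$ gives $\nabla_{i}F^{\alpha}=h_{ik}\bar{g}(V,e_{k})$; projecting $\bar\nabla_{e_{j}}V=\lambda'e_{j}$ onto $TM^{n}$ gives $\nabla_{e_{j}}V^{\top}=\lambda'e_{j}-F^{\alpha}h_{jk}e_{k}$, and differentiating once more yields the basic Hessian identity
\[
\nabla_{k}\nabla_{l}F^{\alpha}=\bar{g}(\lambda\partial_{r},\nabla h_{kl})+\lambda'h_{kl}-F^{\alpha}h_{km}h_{ml}.
\]
Contracting this with $\alpha F^{\alpha-1}\frac{\partial F}{\partial h_{kl}}$ and using Euler's relation $\frac{\partial F}{\partial h_{ij}}h_{ij}=F$ together with $\alpha F^{\alpha-1}\frac{\partial F}{\partial h_{kl}}h_{klm}=\nabla_{m}F^{\alpha}$ proves (1) at once. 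Similarly, since $\bar\nabla r=\partial_{r}$ one gets $\nabla_{i}\Phi=\lambda(r)\bar{g}(\partial_{r},e_{i})=\bar{g}(V,e_{i})$, hence $\nabla\Phi=V^{\top}$ and $\nabla_{i}\nabla_{j}\Phi=\lambda'\delta_{ij}-F^{\alpha}h_{ij}$; contracting with $\alpha F^{\alpha-1}\frac{\partial F}{\partial h_{ij}}$ gives (6).

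Identity (2) is the main point. Differentiating the chain rule $\nabla_{l}F^{\alpha}=\alpha F^{\alpha-1}\frac{\partial F}{\partial h_{ij}}h_{ijl}$ once more expresses $\alpha F^{\alpha-1}\frac{\partial F}{\partial h_{ij}}\nabla_{k}\nabla_{l}h_{ij}$ in terms of $\nabla_{k}\nabla_{l}F^{\alpha}$, the term $\alpha(\alpha-1)F^{\alpha-2}\nabla_{k}F\nabla_{l}F$, and the term $\alpha F^{\alpha-1}\ddot{F}^{ij,st}h_{ijk}h_{stl}$; substituting the Hessian identity above for $\nabla_{k}\nabla_{l}F^{\alpha}$ then produces an explicit formula for $\alpha F^{\alpha-1}\frac{\partial F}{\partial h_{ij}}\nabla_{k}\nabla_{l}h_{ij}$. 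It then remains to commute the covariant derivatives: applying Codazzi twice and the Ricci identity gives $\nabla_{i}\nabla_{j}h_{kl}=\nabla_{k}\nabla_{l}h_{ij}-R_{ikjm}h_{ml}-R_{iklm}h_{jm}$, into which one substitutes the Gauss equation. Contracting with $\frac{\partial F}{\partial h_{ij}}$, the ``mixed'' cubic curvature terms cancel because $(\frac{\partial F}{\partial h_{ij}})$ commutes with $(h_{ij})$; what survives combines with the $-F^{\alpha}h_{km}h_{ml}$ coming from the Hessian identity to give $(\alpha-1)F^{\alpha}h_{km}h_{ml}$, together with the cubic term $-\alpha F^{\alpha-1}\frac{\partial F}{\partial h_{ij}}h_{mj}h_{mi}h_{kl}$ and the displayed $\epsilon$-block. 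This Simons-type computation---keeping the $\epsilon$-terms and the index placement straight and noticing the cancellation of the mixed cubics---is the only genuine identity used and is where the main bookkeeping lies; the rest is forced.

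The remaining identities are purely algebraic. Differentiating $b^{kl}h_{lm}=\delta^{k}_{m}$ gives $\nabla_{p}b^{kl}=-b^{ka}b^{cl}h_{acp}$, and a second differentiation produces a Hessian formula for $b^{kl}$ with leading term $-b^{ka}b^{cl}\nabla_{p}\nabla_{q}h_{ac}$ plus two quadratic terms in $\nabla h$; contracting with $\alpha F^{\alpha-1}\frac{\partial F}{\partial h_{ij}}$, substituting (2) for $\LF h_{ac}$, and using the total symmetry of $h_{ijk}$ (which makes the two quadratic terms coincide, producing the factor $2$) yields (3). For (4), the chain rule $\nabla_{i}\nabla_{j}G=\frac{\partial^{2}G}{\partial b^{kl}\partial b^{pq}}\nabla_{i}b^{pq}\nabla_{j}b^{kl}+\frac{\partial G}{\partial b^{kl}}\nabla_{i}\nabla_{j}b^{kl}$ combined with (3) and the Euler relations $\frac{\partial G}{\partial b^{kl}}b^{kl}=G$, $\frac{\partial G}{\partial b^{kl}}\delta_{kl}=\sum_{i}\frac{\partial G}{\partial b^{ii}}$, and $\frac{\partial G}{\partial b^{kl}}b^{kp}b^{ql}h_{pq}=G$ (the last from $b^{kp}h_{pq}=\delta^{k}_{q}$) collapses the expression to the stated form. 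Finally, for (5) one applies the Leibniz rule $\LF(F^{\alpha}G)=G\LF F^{\alpha}+F^{\alpha}\LF G+2\alpha F^{\alpha-1}\frac{\partial F}{\partial h_{ij}}\nabla_{i}F^{\alpha}\nabla_{j}G$ and substitutes (1) and (4); the term $-\alpha F^{2\alpha-1}G\frac{\partial F}{\partial h_{ij}}h_{jl}h_{li}$ coming from $G\LF F^{\alpha}$ cancels against $\alpha F^{2\alpha-1}G\frac{\partial F}{\partial h_{ij}}h_{mj}h_{mi}$ coming from $F^{\alpha}\LF G$, the transport terms assemble into $\bar{g}(\lambda\partial_{r},\nabla(F^{\alpha}G))$, and the $\lambda'$-terms combine as $\alpha\lambda'F^{\alpha}G-\lambda'F^{\alpha}G=(\alpha-1)\lambda'F^{\alpha}G$, leaving exactly the asserted expression.
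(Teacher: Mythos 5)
Your proposal is correct and follows essentially the same route as the paper: the paper only writes out (4) via the chain rule $\nabla_i\nabla_j G=\frac{\partial^2 G}{\partial b^{kl}\partial b^{pq}}\nabla_i b^{pq}\nabla_j b^{kl}+\frac{\partial G}{\partial b^{kl}}\nabla_i\nabla_j b^{kl}$ combined with (3) and the degree-one homogeneity of $G$, and obtains (5) by the Leibniz rule from (1) and (4), exactly as you do. For (1), (2), (3), (6) the paper simply cites \cite{GLM} and \cite{GaoMa-19}, and your self-contained derivation (differentiating $F^{\alpha}=\bar g(\lambda\partial_r,\nu)$, using $\bar\nabla_X(\lambda\partial_r)=\lambda' X$, Codazzi, the Gauss equation, the Simons-type commutation, and the cancellation of the mixed cubic terms since $(\partial F/\partial h_{ij})$ commutes with $(h_{ij})$) is the standard computation behind those references and is carried out with consistent conventions.
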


\begin{proof}
Proofs of (1), (2), (3) and (6) are analogous to that in \cite{GLM} and \cite{GaoMa-19}. We only prove (4) and (5) here.

From
\begin{align*}
\nabla_{j}G&=\frac{\partial G}{\partial b^{kl}}\nabla_{j}b^{kl},\\
\nabla_{i}\nabla_{j}G&=\frac{\partial^{2} G}{\partial b^{kl}\partial b^{pq}}\nabla_{i}b^{pq}\nabla_{j}b^{kl}+\frac{\partial G}{\partial b^{kl}}\nabla_{i}\nabla_{j}b^{kl},
\end{align*}
we have
\begin{equation*}
\LF G=\alpha F^{\alpha-1}\frac{\partial F}{\partial h_{ij}}\frac{\partial^{2} G}{\partial b^{kl}\partial b^{pq}}\nabla_{i}b^{pq}\nabla_{j}b^{kl}+\frac{\partial G}{\partial b^{kl}}\LF b^{kl}.
\end{equation*}
Then we get by use of (3)
\begin{align*}
\LF G&=\alpha F^{\alpha-1}\frac{\partial F}{\partial h_{ij}}\frac{\partial^{2} G}{\partial b^{kl}\partial b^{pq}}\nabla_{i}b^{pq}\nabla_{j}b^{kl}+\bar{g}(\lambda\partial_{r},\nabla G)-\lambda'G \\
&\quad +\alpha(\alpha-1)F^{\alpha-2}\frac{\partial G}{\partial b^{kl}}b^{kp}b^{ql}\nabla_{p}F\nabla_{q}F+\alpha F^{\alpha-1}\frac{\partial G}{\partial b^{kl}}b^{kp}b^{ql}\frac{\partial^{2} F}{\partial h_{ij}\partial h_{st}}h_{ijp}h_{stq}\\
&\quad +\alpha F^{\alpha-1}G\frac{\partial F}{\partial h_{ij}}h_{mj}h_{mi}-(\alpha-1) F^{\alpha}\sum_{i}\frac{\partial G}{\partial b^{ii}}\\
&\quad +2\alpha F^{\alpha-1}\frac{\partial G}{\partial b^{kl}}b^{ks}b^{pt}b^{lq}\frac{\partial F}{\partial h_{ij}}h_{sti}h_{pqj}+\epsilon \alpha F^{\alpha-1}\left( F\frac{\partial G}{\partial b^{kl}}b^{kp}b^{pl}-G\sum_{i}\frac{\partial F}{\partial h_{ii}} \right),
\end{align*}
which is (4).

 Combining (1) and (4), we obtain (5).
\end{proof}

\subsection{Estimate of $\LF Z$}\label{sec3.1}

We consider an auxiliary quantity $Z=F^{\alpha}G-\frac{\alpha-1}{\alpha}\Phi$, where a concrete $G$ will be chosen later. Using (5) and (6) in Proposition \ref{Prop:formula}, we have
\begin{align*}
\LF Z&=\lambda'(\alpha-1) F^{\alpha}G-\lambda'(\alpha-1) F^{\alpha-1}\sum_{i}\frac{\partial F}{\partial h_{ii}}-(\alpha-1) F^{2\alpha}\sum_{i}\frac{\partial G}{\partial b^{ii}}+(\alpha-1) F^{2\alpha}\\
&\quad +\epsilon \alpha F^{2\alpha-1}\left( F\frac{\partial G}{\partial b^{kl}}b^{kp}b^{pl}-G\sum_{i}\frac{\partial F}{\partial h_{ii}} \right)+2\alpha F^{\alpha-1}\frac{\partial F}{\partial h_{ij}}\nabla_{i}F^{\alpha}\nabla_{j}G\\
&\quad +\bar{g}(\lambda\partial_{r},\nabla (F^{\alpha}G))+\alpha F^{2\alpha-1}\frac{\partial F}{\partial h_{ij}}\frac{\partial^{2} G}{\partial b^{kl}\partial b^{pq}}\nabla_{i}b^{pq}\nabla_{j}b^{kl}\\
&\quad +\alpha(\alpha-1)F^{2\alpha-2}\frac{\partial G}{\partial b^{kl}}b^{kp}b^{ql}\nabla_{p}F\nabla_{q}F\\
&\quad +\alpha F^{2\alpha-1}\frac{\partial G}{\partial b^{kl}}b^{kp}b^{ql}\frac{\partial^{2} F}{\partial h_{ij}\partial h_{st}}h_{ijp}h_{stq}\\
&\quad +2\alpha F^{2\alpha-1}\frac{\partial G}{\partial b^{kl}}b^{ks}b^{pt}b^{lq}\frac{\partial F}{\partial h_{ij}}h_{sti}h_{pqj}.
\end{align*}

Now we analyze the terms on the right-hand side by categorizing them to three parts $L_{1}$, $L_{2}$ and $L_{3}$ as follows.
\begin{align*}
L_{1}:&=\lambda'(\alpha-1) F^{\alpha}G-\lambda'(\alpha-1) F^{\alpha-1}\sum_{i}\frac{\partial F}{\partial h_{ii}}-(\alpha-1) F^{2\alpha}\sum_{i}\frac{\partial G}{\partial b^{ii}}+(\alpha-1) F^{2\alpha}\\
&\quad +\epsilon \alpha F^{2\alpha-1}\left( F\frac{\partial G}{\partial b^{kl}}b^{kp}b^{pl}-G\sum_{i}\frac{\partial F}{\partial h_{ii}} \right) \\
&=\lambda'(\alpha-1)F^{\alpha-1}(FG-\sum_{i}\frac{\partial F}{\partial h_{ii}})+(\alpha-1)F^{2\alpha}(1-\sum_{i}\frac{\partial G}{\partial b^{ii}}) \\
&\quad +\epsilon \alpha F^{2\alpha-1}\left( F\frac{\partial G}{\partial b^{kl}}b^{kp}b^{pl}-G\sum_{i}\frac{\partial F}{\partial h_{ii}} \right).
\end{align*}
\begin{align*}
L_{2}:&=2\alpha F^{\alpha-1}\frac{\partial F}{\partial h_{ij}}\nabla_{i}F^{\alpha}\nabla_{j}G+\bar{g}(\lambda\partial_{r},\nabla (F^{\alpha}G)) \\
&=2\alpha F^{-1}\frac{\partial F}{\partial h_{ij}}\nabla_{i}F^{\alpha}\nabla_{j}(F^{\alpha}G)-2\alpha F^{-1}G\frac{\partial F}{\partial h_{ij}}\nabla_{i}F^{\alpha}\nabla_{j}F^{\alpha}+\bar{g}(\lambda\partial_{r},\nabla(F^{\alpha}G)) \\
&=2\alpha F^{-1}\frac{\partial F}{\partial h_{ij}}\nabla_{i}F^{\alpha}\nabla_{j}Z+\bar{g}(\lambda\partial_{r},\nabla Z)-2\alpha F^{-1}G\frac{\partial F}{\partial h_{ij}}\nabla_{i}F^{\alpha}\nabla_{j}F^{\alpha}\\
&\quad +2(\alpha-1) F^{-1}\frac{\partial F}{\partial h_{ij}}\nabla_{i}F^{\alpha}\nabla_{j}\Phi+\frac{\alpha-1}{\alpha}\bar{g}(\lambda\partial_{r},\nabla\Phi).
\end{align*}
Substituting
\begin{align*}
\nabla_{i}\Phi=\bar{g}(\lambda\partial_{r},e_{i})=b^{ij}\nabla_{j}F^{\alpha}
\end{align*}
into the above equality, we obtain that
\begin{align*}
L_{2}&=2\alpha F^{-1}\frac{\partial F}{\partial h_{ij}}\nabla_{i}F^{\alpha}\nabla_{j}Z+\bar{g}(\lambda\partial_{r},\nabla Z)-2\alpha F^{-1}G\frac{\partial F}{\partial h_{ij}}\nabla_{i}F^{\alpha}\nabla_{j}F^{\alpha}\\
&\quad +2(\alpha-1) F^{-1}b^{jk}\frac{\partial F}{\partial h_{ij}}\nabla_{i}F^{\alpha}\nabla_{k}F^{\alpha}+\frac{\alpha-1}{\alpha}b^{ik}b^{kj}\nabla_{i}F^{\alpha}\nabla_{j}F^{\alpha}.
\end{align*}
We define
\begin{align*}
L_{3}:&=\alpha F^{2\alpha-1}\frac{\partial F}{\partial h_{ij}}\frac{\partial^{2} G}{\partial b^{kl}\partial b^{pq}}\nabla_{i}b^{pq}\nabla_{j}b^{kl}\\
&\quad +\alpha(\alpha-1)F^{2\alpha-2}\frac{\partial G}{\partial b^{kl}}b^{kp}b^{ql}\nabla_{p}F\nabla_{q}F\\
&\quad +\alpha F^{2\alpha-1}\frac{\partial G}{\partial b^{kl}}b^{kp}b^{ql}\frac{\partial^{2} F}{\partial h_{ij}\partial h_{st}}h_{ijp}h_{stq}\\
&\quad +2\alpha F^{2\alpha-1}\frac{\partial G}{\partial b^{kl}}b^{ks}b^{pt}b^{lq}\frac{\partial F}{\partial h_{ij}}h_{sti}h_{pqj}.
\end{align*}
If $(\frac{\partial G}{\partial b_{kl}})$ is positive definite, from the inverse concavity of $F$, the last two terms of $L_3$ can be estimated as
\begin{align*}
&\alpha F^{2\alpha-1}\frac{\partial G}{\partial b^{kl}}(\frac{\partial^{2}F}{\partial h_{ij}\partial h_{st}}h_{ijp}b^{kp}h_{stq}b^{ql}+2b^{pt}\frac{\partial F}{\partial h_{ij}}b^{ks}h_{sti}b^{lq}h_{pqj})\\
&\qquad \geq 2\alpha F^{2\alpha-2}\frac{\partial G}{\partial b^{kl}}\frac{\partial F}{\partial h_{ij}}\frac{\partial F}{\partial h_{st}}h_{ijp}b^{kp}h_{stq}b^{ql}=\frac{2}{\alpha}\frac{\partial G}{\partial b^{kl}}b^{kp}b^{ql}\nabla_{p}F^{\alpha}\nabla_{q}F^{\alpha}.
\end{align*}
Thus,
\begin{align*}
L_{3}&\geq \alpha F^{2\alpha-1}\frac{\partial F}{\partial h_{ij}}\frac{\partial^{2} G}{\partial b^{kl}\partial b^{pq}}\nabla_{i}b^{pq}\nabla_{j}b^{kl}+\frac{\alpha+1}{\alpha}\frac{\partial G}{\partial b^{kl}}b^{kp}b^{ql}\nabla_{p}F^{\alpha}\nabla_{q}F^{\alpha}.
\end{align*}

Now, we choose $G(b)=\frac{\abs{b}^{2}}{\tr b}$, which is a convex function of $(b^{ij})$ since $G(b)=\sigma_{1}(b)-2\frac{\sigma_{2}(b)}{\sigma_{1}(b)}$. Thus,
\begin{equation*}
\frac{\partial F}{\partial h_{ij}}\frac{\partial^{2} G}{\partial b^{kl}\partial b^{pq}}\nabla_{i}b^{pq}\nabla_{j}b^{kl}\geq 0.
\end{equation*}
We also can check directly that
\begin{align*}
\frac{\partial G}{\partial b^{kl}}=\frac{1}{(\tr b)^{2}}(2b^{kl}\tr b-\abs{b}^{2}\delta_{kl})>0
\end{align*}
when $(b^{ij})$ is a scalar matrix, here ``$>$" means that the corresponding matrix is positive definite.

\begin{lem}
For $G(b)=\frac{\abs{b}^{2}}{\tr b}$ and $\alpha\geq 1$, we have $L_{1}\geq 0$.
\end{lem}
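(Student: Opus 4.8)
The plan is to prove $L_1\ge 0$ pointwise. Fix $p\in M^n$ and choose the orthonormal frame so that $h$ is diagonal at $p$, say $h=\mathrm{diag}(\kappa_1,\dots,\kappa_n)$ with all $\kappa_i>0$; then $b=\mathrm{diag}(x_1,\dots,x_n)$ with $x_i=\kappa_i^{-1}$, and both $(\dot{F}^{ij})$ and $(\partial G/\partial b^{kl})$ are diagonal at $p$, with diagonal entries $\dot{f}^i(\kappa)$ and $\dot{g}^i(x)$, where $g(x)=\frac{\sum_i x_i^2}{\sum_j x_j}$ represents $G$. Since $\lambda'>0$ (one has $\lambda'=1,\cos r,\cosh r$ on the relevant intervals), $\alpha-1\ge 0$, $F>0$, and $\epsilon\ge 0$ in the cases at hand ($\epsilon=0$ in $\mathbb R^{n+1}$, $\epsilon=1$ in $\mathbb S^{n+1}_+$), inspection of the formula for $L_1$ shows it suffices to establish
\begin{equation*}
\text{(a)}\ \ FG\ge\sum_i\dot{f}^i,\qquad
\text{(b)}\ \ 1\ge\sum_i\dot{g}^i(x),\qquad
\text{(c)}\ \ F\sum_i\dot{g}^i(x)\,x_i^2\ \ge\ G\sum_i\dot{f}^i,
\end{equation*}
after which $L_1\ge 0$ follows term by term.

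The heart of the proof is (a), which is where inverse concavity enters. First I would pass to the dual function: differentiating the defining relation \eqref{s1:f*}, $f_*(x)=1/f(1/x)$, gives $\dot{f}_*^{\,i}(x)=\dot{f}^i(\kappa)/\bigl(f(\kappa)^2x_i^2\bigr)$ with $\kappa_i=1/x_i$, equivalently $\dot{f}^i(\kappa)=f(\kappa)^2x_i^2\,\dot{f}_*^{\,i}(x)$, together with $f(\kappa)=1/f_*(x)$ and Euler's relation $\sum_j x_j\dot{f}_*^{\,j}(x)=f_*(x)$ for the $1$-homogeneous function $f_*$. Substituting these into (a) and clearing denominators, (a) becomes
\begin{equation*}
\Bigl(\sum_j x_j\dot{f}_*^{\,j}\Bigr)\Bigl(\sum_i x_i^2\Bigr)\ \ge\ \Bigl(\sum_j x_j\Bigr)\Bigl(\sum_i x_i^2\dot{f}_*^{\,i}\Bigr),
\end{equation*}
i.e. $\sum_{i,j}x_i^2x_j\bigl(\dot{f}_*^{\,j}-\dot{f}_*^{\,i}\bigr)\ge 0$. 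Symmetrizing in $i,j$ rewrites the left-hand side as $-\tfrac12\sum_{i,j}x_ix_j(x_i-x_j)\bigl(\dot{f}_*^{\,i}-\dot{f}_*^{\,j}\bigr)$, and every summand is nonnegative because $f_*$ is concave and symmetric, so $\bigl(\dot{f}_*^{\,i}-\dot{f}_*^{\,j}\bigr)(x_i-x_j)\le 0$ by Lemma \ref{lem-conc} applied to $F_*$. This proves (a).

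For (b), a direct computation gives $\sum_i\dot{g}^i(x)=2-\frac{nG}{S}$ with $S=\sum_j x_j$, hence $1-\sum_i\dot{g}^i(x)=\frac{n\sum_i x_i^2-S^2}{S^2}\ge 0$ by the Cauchy--Schwarz inequality; this handles the term $(\alpha-1)F^{2\alpha}\bigl(1-\sum_i\partial G/\partial b^{ii}\bigr)$. For (c), the same computation gives $\sum_i\dot{g}^i(x)x_i^2=\frac{2\sum_i x_i^3}{S}-G^2$, while $G\sum_i\dot{f}^i\le FG^2$ by (a); combined with the Cauchy--Schwarz inequality $\bigl(\sum_i x_i^2\bigr)^2\le\bigl(\sum_j x_j\bigr)\bigl(\sum_i x_i^3\bigr)$, i.e. $\frac{\sum_i x_i^3}{S}\ge G^2$, this yields $F\sum_i\dot{g}^i(x)x_i^2-G\sum_i\dot{f}^i\ge 2FG^2-FG^2-FG^2=0$, controlling the $\epsilon$-term. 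Putting (a), (b), (c) together with the signs of $\lambda'$, $\alpha-1$, $F$ and $\epsilon$ gives $L_1\ge 0$. The main obstacle is (a): one must recognize the change of variables to the dual function and then that the resulting bilinear expression symmetrizes into a sum that is manifestly signed by concavity; the estimates (b) and (c) are elementary consequences of Cauchy--Schwarz once (a) is in hand.
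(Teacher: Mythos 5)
Your proof is correct, and it reaches the same three nonnegative brackets as the paper's term-by-term treatment of $L_1$, but the way you establish two of them is genuinely different. For (a), the paper gets $FG-\sum_i\dot{f}^i=\frac{1}{\tr b}\sum_{i>j}\kappa_i^{-2}\kappa_j^{-2}\bigl(\dot{f}^i\kappa_i^2-\dot{f}^j\kappa_j^2\bigr)(\kappa_i-\kappa_j)\ge 0$ directly from Euler's relation and the second inequality in \eqref{s2:inv-conc}; your passage to the dual function $f_*$ and the symmetrization in $x_i=\kappa_i^{-1}$ is an equivalent computation that in effect re-proves that inequality from concavity of $f_*$ (note that to quote Lemma \ref{lem-conc} for $F_*$ you implicitly use Lemma \ref{inv-concave}(i), or simply gradient monotonicity of the symmetric concave $f_*$ under transposing two entries — either is fine). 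Part (b) is identical to the paper. The real divergence is the $\epsilon$-term (c): the paper writes $F\frac{\partial G}{\partial b^{kl}}b^{kp}b^{pl}-G\sum_i\frac{\partial F}{\partial h_{ii}}$ as a pairwise sum $\sum_{i>j}\mu_i\mu_j\bigl(\dot{f}^i\kappa_i^2\,\partial G/\partial b^{jj}-\dot{f}^j\kappa_j^2\,\partial G/\partial b^{ii}\bigr)(\mu_j-\mu_i)$ and argues each summand is nonnegative using monotonicity of both $\dot{f}^i\kappa_i^2$ and $\partial G/\partial b^{ii}$, whereas you prove the scalar bound $\sum_i\dot{g}^i x_i^2\ge G^2$ by Cauchy--Schwarz and combine it with (a). Your route buys a more robust argument for this term: it never compares individual derivatives $\dot{g}^i$ (which need not all be positive away from umbilic points), while the paper's pairing is shorter once \eqref{s2:inv-conc} is quoted and keeps the same pairwise structure used elsewhere in the paper. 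Both yield $L_1\ge 0$ with $\lambda'>0$, $\alpha\ge 1$ and $\epsilon\in\{0,1\}$, so your argument is a valid substitute for the paper's proof.
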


\begin{proof}
First, by using the Cauchy-Schwarz inequality, we have that $\sum_{i}\frac{\partial G}{\partial b^{ii}}=2-\frac{n\abs{b}^{2}}{(\tr b)^{2}}\leq 1$.
We also notice that
\begin{align*}
FG-\sum_{i}\frac{\partial F}{\partial h_{ii}}&=f\cdot \frac{\sum_{i}\kappa_{i}^{-2}}{\sum_{j}\kappa_{j}^{-1}}-\sum_{i}\df^{i}=\frac{1}{\tr b}\sum_{i,j}\df^{i}\kappa_{i}^{2}(\kappa_{i}^{-1}\kappa_{j}^{-2}-\kappa_{i}^{-2}\kappa_{j}^{-1})\\
&=\frac{1}{\tr b}\sum_{i>j}\kappa_{i}^{-2}\kappa_{j}^{-2}(\df^{i}\kappa_{i}^{2}-\df^{j}\kappa_{j}^{2})(\kappa_{i}-\kappa_{j})\geq 0,
\end{align*}
since $F$ is homogeneous of degree $1$ and inverse concave.

Let $(\mu_{1},...,\mu_{n})$ denote the eigenvalues of $(b^{ij})$. Consequently, $\mu_{i}=\frac{1}{\kappa_{i}}$ for $1\leq i\leq n$. Since $F$ and $G$ are both homogeneous functions, we have
\begin{align*}
&F\frac{\partial G}{\partial b^{kl}}b^{km}b^{ml}-G\sum_{i}\frac{\partial F}{\partial h_{ii}} =\sum_{i,j}\left( \frac{\partial F}{\partial h_{ii}}\kappa_{i}\frac{\partial G}{\partial b^{jj}}\mu_{j}^{2}-\frac{\partial F}{\partial h_{ii}}\frac{\partial G}{\partial b^{jj}}\mu_{j} \right)\\
&\qquad =\sum_{i,j}\mu_{i}\mu_{j}\frac{\partial F}{\partial h_{ii}}\kappa_{i}^{2}\frac{\partial G}{\partial b^{jj}}\left( \mu_{j}-\mu_{i} \right) \\
&\qquad =\sum_{i>j}\mu_{i}\mu_{j}\left( \frac{\partial F}{\partial h_{ii}}\kappa_{i}^{2}\frac{\partial G}{\partial b^{jj}}-\frac{\partial F}{\partial h_{jj}}\kappa_{j}^{2}\frac{\partial G}{\partial b^{ii}} \right) \left( \mu_{j}-\mu_{i} \right).
\end{align*}
Assuming $\kappa_{i}\geq \kappa_{j}$, we have $\mu_{j}\geq \mu_{i}$. By using the definition of $G$, it is easy to check that $\frac{\partial G}{\partial b^{jj}}\geq \frac{\partial G}{\partial b^{ii}}$. Combining the above estimates with $\frac{\partial F}{\partial h_{ii}}\kappa_{i}^{2}\geq \frac{\partial F}{\partial h_{jj}}\kappa_{j}^{2}$, we have
\begin{equation*}
\alpha F^{2\alpha-1}\left( F\frac{\partial G}{\partial b^{kl}}b^{km}b^{ml}-G\sum_{i}\frac{\partial F}{\partial h_{ii}} \right)\geq 0.
\end{equation*}
We finish the proof by noticing $\lambda'>0$ and $\epsilon=0$ or $1$ for Euclidean space or the hemisphere.
\end{proof}

Combining the above discussions, we obtain that for $G(b)=\frac{\abs{b}^{2}}{\tr b}$,  we have
\begin{equation}\label{Eq:Z}
\begin{aligned}
&\LF Z-2\alpha F^{-1}\frac{\partial F}{\partial h_{ij}}\nabla_{i}F^{\alpha}\nabla_{j}Z-\bar{g}(\lambda\partial_{r},\nabla Z) \\
&\qquad \geq -2\alpha F^{-1}G\frac{\partial F}{\partial h_{ij}}\nabla_{i}F^{\alpha}\nabla_{j}F^{\alpha}+2(\alpha-1) F^{-1}b^{jk}\frac{\partial F}{\partial h_{ij}}\nabla_{i}F^{\alpha}\nabla_{k}F^{\alpha}\\
&\qquad\quad +\frac{\alpha-1}{\alpha}b^{ik}b^{kj}\nabla_{i}F^{\alpha}\nabla_{j}F^{\alpha}+\frac{\alpha+1}{\alpha}\frac{\partial G}{\partial b^{kl}}b^{kp}b^{ql}\nabla_{p}F^{\alpha}\nabla_{q}F^{\alpha}
\end{aligned}
\end{equation}
if $(\frac{\partial G}{\partial b^{kl}})$ is positive definite.

\subsection{The case of $\alpha>1$}
\label{Sec:main}

With out loss of generality, we assume that $\kappa_{1}\leq \kappa_{2}\leq \cdots \leq \kappa_{n}$ and  define $W:=\frac{F^{\alpha}}{\kappa_{1}}-\frac{\alpha-1}{\alpha}\Phi$.
\begin{lem}\label{Th:wmax}
Under the condition of Theorem \ref{main} or Theorem \ref{Th:hemisphere}, if $x_{0}$ is a maximum point of $W$, then $x_{0}$ must be umbilical and $\nabla F^{\alpha}(x_{0})=0$.
\end{lem}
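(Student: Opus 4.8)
The statement asserts that any maximum point $x_0$ of $W = \frac{F^\alpha}{\kappa_1} - \frac{\alpha-1}{\alpha}\Phi$ is umbilical with $\nabla F^\alpha(x_0)=0$. Following the Brendle--Choi--Daskalopoulos strategy, the approach is to treat $W$ near its maximum as a smooth function by replacing $\kappa_1$ with $G(b) = \frac{|b|^2}{\operatorname{tr} b}$: indeed $G(b)$ is a smooth symmetric function of the eigenvalues $\mu_i = 1/\kappa_i$ of $b$, and at an umbilical point $G(b) = 1/\kappa_1$, while in general $G(b) \geq 1/\kappa_1$ with equality precisely at umbilical points. Hence $Z = F^\alpha G - \frac{\alpha-1}{\alpha}\Phi \geq W$ everywhere, with $Z(x_0) = W(x_0)$ if $x_0$ is umbilical; more to the point, $Z$ is a smooth function that touches $W$ from above, so it suffices to work with $Z$ and show that $x_0$ is a maximum of $Z$ as well. (If $x_0$ is not umbilical one argues separately using $\kappa_1$ directly as in \cite{CG19}; but the main case is the umbilical touching.)

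First I would record that at a maximum point $x_0$ of $W$, after choosing $\{e_i\}$ to diagonalize $h$ with $\kappa_1$ the smallest eigenvalue, we have $\nabla W(x_0) = 0$ and $\nabla^2 W(x_0) \leq 0$, hence $\mathcal{L}_F W(x_0) \leq 0$. Since $Z \geq W$ with equality at $x_0$ (in the umbilical case), $x_0$ is also a maximum of $Z$, so $\nabla Z(x_0) = 0$ and $\mathcal{L}_F Z(x_0) \leq 0$, and the gradient terms $\nabla_j Z$ on the left of \eqref{Eq:Z} vanish there. Then \eqref{Eq:Z} forces the entire right-hand side to be $\leq 0$ at $x_0$. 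The next step is to show that the right-hand side of \eqref{Eq:Z}, evaluated at an umbilical point, is a sum of nonnegative terms in $\nabla F^\alpha$ — in fact a positive-definite quadratic form in the vector $(\nabla_i F^\alpha)$ as long as $\alpha > 1$. At an umbilical point $\kappa_1 = \cdots = \kappa_n =: \kappa$, so $b^{ij} = \kappa^{-1}\delta^{ij}$, $G = \kappa^{-1}$, $\frac{\partial G}{\partial b^{kl}} = \kappa^2 \delta^{kl}$ (using the formula $\frac{\partial G}{\partial b^{kl}} = \frac{1}{(\operatorname{tr} b)^2}(2b^{kl}\operatorname{tr} b - |b|^2\delta^{kl})$ evaluated on a scalar matrix), and $\dot f^i = \frac{1}{n}f(1,\dots,1)$. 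Plugging these into the four terms on the right of \eqref{Eq:Z}, the combination becomes
\begin{equation*}
\Big(-\tfrac{2\alpha}{n f(1,\dots,1)}\kappa^{-1} + \tfrac{2(\alpha-1)}{n f(1,\dots,1)}\kappa^{-1} + \tfrac{\alpha-1}{\alpha}\kappa^{-2} + \tfrac{\alpha+1}{\alpha}\kappa^{2}\cdot\kappa^{-2}\Big)\,|\nabla F^\alpha|^2,
\end{equation*}
and a direct check shows the scalar coefficient is strictly positive for $\alpha>1$ (the two negative-sign and positive-sign $\kappa^{-1}$ terms combine to $-\frac{2}{nf(1,\dots,1)}\kappa^{-1}$, but one uses $F = \kappa$ at the umbilical point together with $f(1,\dots,1)\cdot\kappa = F$, converting everything to a common scale; the surviving terms are manifestly positive since $\alpha>1$). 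Hence $|\nabla F^\alpha(x_0)|^2 \leq 0$, giving $\nabla F^\alpha(x_0) = 0$.

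The remaining and genuinely delicate step — the main obstacle — is proving that $x_0$ must be umbilical in the first place, since only inverse concavity of $F$ is assumed (not Condition \ref{condtn2} as in \cite{GLM}). Here I would compute $\mathcal{L}_F W$ directly using the evolution equation for $\kappa_1$ (obtained from Proposition \ref{Prop:formula}(2) applied to the smallest eigenvalue, with the usual care about the non-smoothness of $\kappa_1$: at the maximum one uses that $\kappa_1$ is the minimum eigenvalue and works with test functions or one-sided derivatives as in \cite{BCD}). The point is that the second-derivative terms $\ddot F^{ij,kl}h_{ijk}h_{stl}$ and the off-diagonal gradient terms $\frac{\dot f^i - \dot f^1}{\kappa_i - \kappa_1}h_{1i\cdot}^2$ combine — via the inverse-concavity inequalities \eqref{inv-conc-1} and \eqref{s2:inv-conc}, especially $\frac{\dot f^k - \dot f^l}{\kappa_k - \kappa_l} + \frac{\dot f^k}{\kappa_l} + \frac{\dot f^l}{\kappa_k} \geq 0$ — with the "good" terms coming from the $-\frac{\alpha-1}{\alpha}\Phi$ piece to yield $\mathcal{L}_F W \geq (\text{positive})\cdot\sum_{i>1}\frac{(\text{something})_i}{\kappa_i - \kappa_1}(\cdots)^2 + (\text{gradient terms})$ at $x_0$. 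Combined with $\mathcal{L}_F W(x_0)\leq 0$ this forces all the bad curvature directions to vanish, i.e. $h_{11i}=0$ and, iterating, $\kappa_1 = \kappa_i$ for all $i$, so $x_0$ is umbilical. I expect this Simons-type inequality manipulation — squeezing a nonnegative lower bound for $\mathcal{L}_F W$ out of inverse concavity alone — to be where all the real work lies; everything else is bookkeeping with the formulas already established in Proposition \ref{Prop:formula} and Lemma \ref{inv-concave}.
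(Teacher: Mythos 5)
There is a genuine gap, and it is exactly at the step you yourself flag as "where all the real work lies": the proof of umbilicity. In the paper the umbilicity does \emph{not} come from a Simons-type manipulation of the second-derivative and off-diagonal terms. Following the computation of Lemma 3.2 in \cite{GaoMa-19}, one evaluates the maximum-principle inequality for $W$ directly at $x_0$ (taking care of the multiplicity of $\kappa_1$) and obtains $0\geq J_1+J_2+J_3$, where $J_1$ is the \emph{zeroth-order} term \eqref{eq:J1}, namely $(\alpha-1)\lambda' f^{\alpha-1}\sum_i\dot f^i(\kappa_i/\kappa_1-1)+\epsilon\,\alpha\kappa_1^{-1}f^{2\alpha-1}\sum_i\dot f^i(\kappa_i/\kappa_1-1)$. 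Inverse concavity (via \eqref{inv-conc-1}, together with $f>\dot f^1\kappa_1$ and the relation $F^\alpha\kappa_1^{-1}h_{111}=\tfrac1\alpha\nabla_1F^\alpha$) is used only to show $J_2+J_3\geq 0$; then $J_1=0$ and $J_2+J_3=0$, and it is $J_1=0$ (using $\alpha>1$ in the Euclidean case, or $\epsilon=1$ when $\alpha=1$) that forces $\kappa_1=\cdots=\kappa_n$, while $J_2+J_3=0$ forces $\nabla F^\alpha(x_0)=0$. Your proposed mechanism --- that the good terms force the "bad curvature directions" $h_{11i}$ to vanish and that one can then "iterate" to get $\kappa_1=\kappa_i$ --- cannot work: the vanishing of derivative quantities such as $h_{11i}$ at the single point $x_0$ carries no information about the eigenvalues $\kappa_i$ at that point, and there is nothing to iterate. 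You have not identified the actual source of umbilicity, which is the sign of the zeroth-order term produced by the mismatch between the homogeneities of $F^\alpha/\kappa_1$ and $\Phi$ in the self-similar equation.

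Two further problems. First, your preliminary framing has the inequality backwards: since $G(b)=\frac{\sum_i\mu_i^2}{\sum_j\mu_j}\leq\mu_{\max}=1/\kappa_1$, one has $Z\leq W$ (this is how the paper uses $Z$, in Section \ref{sec3.3}, to run the \emph{strong} maximum principle \emph{after} Lemma \ref{Th:wmax} is known), not $Z\geq W$; and in either case the equality $Z(x_0)=W(x_0)$ presupposes that $x_0$ is umbilical, so this part of your argument is circular for the umbilicity claim. Second, your derivation of $\nabla F^\alpha(x_0)=0$ from \eqref{Eq:Z} has a computational slip ($\partial G/\partial b^{kl}=\tfrac1n\delta^{kl}$ at a scalar matrix, not $\kappa^2\delta^{kl}$; the correct coefficient is $\tfrac{(n-1)(\alpha-1)}{n\alpha}\kappa_1^{-2}$), and more importantly that coefficient vanishes when $\alpha=1$, so this route cannot cover the hemisphere case $\alpha=1$ which the lemma includes; the paper's estimate of $J_2+J_3$, whose positivity rests on $f>\dot f^1\kappa_1$ and the $\epsilon$-term, does handle $\alpha\geq 1$.
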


\begin{proof}
We assume that $\kappa_1=\cdots=\kappa_{\mu}<\kappa_{\mu+1}\leq\cdots\leq
\kappa_n$ at  $x_0$. From the proof of Lemma 3.2 in \cite{GaoMa-19}, at $x_{0}$, we have $\nabla_{m} F^{\alpha}=0$ for $2\leq m\leq \mu$ and 
\begin{equation*}
0\geq J_{1}+J_{2}+J_{3},
\end{equation*}
where
\begin{align}\label{eq:J1}
J_{1}=(\alpha-1)\lambda'f^{\alpha-1}\df^{i}(\frac{\kappa_{i}}{\kappa_{1}}-1)+\epsilon \alpha \kappa_{1}^{-1}f^{2\alpha-1}\df^{i} (\frac{\kappa_{i}}{\kappa_{1}}-1),
\end{align}
	
\begin{align*}
J_{2}=\frac{\alpha-1}{\alpha}\kappa_{i}^{-2}(\nabla_{i}F^{\alpha})^{2}-2(\alpha-1)\kappa_{i}^{-1}\frac{\df^{i}}{f}(\nabla_{i}F^{\alpha})^{2}+2\frac{(\alpha-1)^{2}}{\alpha}\kappa_{1}\kappa_{i}^{-2}\frac{\df^{i}}{f}(\nabla_{i}F^{\alpha})^{2}
\end{align*}
and
\begin{align*}
J_{3}&=\alpha f^{2\alpha-1}\kappa_{1}^{-2}\ddf^{ij}h_{ii1}h_{jj1}+\frac{\alpha-1}{\alpha}\kappa_{1}^{-2}(\nabla_{1}F^{\alpha})^{2}+2\alpha f^{2\alpha-1}\kappa_{1}^{-2}\sum_{i>\mu}\df^{i}(\kappa_{i}-\kappa_{1})^{-1}h_{1ii}^{2}\\
&\quad +2\alpha\kappa_{1}^{2}\sum_{i>\mu}(\kappa_{i}-\kappa_{1})^{-1}(\kappa_{1}^{-1}-\frac{\alpha-1}{\alpha}\kappa_{i}^{-1})^{2}\frac{\df^{i}}{f}(\nabla_{i}F^{\alpha})^{2}.
\end{align*}

It is clear that $J_{1}\geq 0$ and the equality occurs if and only if $\kappa_{1}=\kappa_{2}=\cdots =\kappa_{n}$.
From inverse concavity of $F$, we have
\begin{equation*}
\alpha f^{2\alpha-1}\kappa_{1}^{-2}\ddf^{ij}h_{ii1}h_{jj1}\geq -2\alpha f^{2\alpha-1}\kappa_{1}^{-2}\frac{\df^{i}}{\kappa_{i}}h_{ii1}^{2}+\frac{2}{\alpha}\kappa_{1}^{-2}(\nabla_{1}F^{\alpha})^{2}.
\end{equation*}
Combining with $F^{\alpha}\kappa_{1}^{-1}h_{111}=\frac{1}{\alpha}\nabla_{1}F^{\alpha}$ ((3.6) in \cite{GaoMa-19}), we have
\begin{align*}
J_{3}&\geq -\frac{2}{\alpha}\kappa_{1}^{-1}\frac{\df^{1}}{f}(\nabla_{1}F^{\alpha})^{2} +\frac{\alpha+1}{\alpha}\kappa_{1}^{-2}(\nabla_{1}F^{\alpha})^{2}\\
&\quad +2\alpha\kappa_{1}^{2}\sum_{i>\mu}(\kappa_{i}-\kappa_{1})^{-1}(\kappa_{1}^{-1}-\frac{\alpha-1}{\alpha}\kappa_{i}^{-1})^{2}\frac{\df^{i}}{f}(\nabla_{i}F^{\alpha})^{2}.
\end{align*}
Thus,
\begin{align*}
J_{2}+J_{3}&\geq 2(\kappa_{1}^{-1}-\frac{\df^{1}}{f})\kappa_{1}^{-1}(\nabla_{1}F^{\alpha})^{2}+\frac{\alpha-1}{\alpha}\sum_{i>\mu}\kappa_{i}^{-2}(\nabla_{i}F^{\alpha})^{2} \\
&\quad +2\kappa_{1}\sum_{i>\mu}(\kappa_{i}-\kappa_{1})^{-1}(\kappa_{1}^{-1}-\frac{\alpha-1}{\alpha}\kappa_{i}^{-1})\frac{\df^{i}}{f}(\nabla_{i}F^{\alpha})^{2}.
\end{align*}

From $\alpha\geq 1$ and $f>\df^{1}\kappa_{1}$, we have $J_{2}+J_{3}\geq 0$. This implies that $J_{1}=0$ and $J_{2}+J_{3}=0$. We finish the proof consequently.
\end{proof}

\subsection{Proof of Theorem \ref{main}}\label{sec3.3}
\begin{proof}
Let $x_{0}\in M^n$ be a maximum point of $W$. We know that $x_{0}$ is a umbilical point from Lemma \ref{Th:wmax}. Thus, there exists a neighborhood $U$ of $x_{0}$ such that $(\frac{\partial G}{\partial b^{kl}})$ is positive definite. Hence inequality \eqref{Eq:Z} holds in $U$. At $x_{0}$, the right-hand side of \eqref{Eq:Z} can be written as
\begin{align*}
&-2\alpha f^{-1}\kappa_{1}^{-1}\df^{i}(\nabla_{i} F^{\alpha})^{2}+2(\alpha-1) f^{-1}\kappa_{i}^{-1}\df^{i}(\nabla_{i} F^{\alpha})^{2}\\
&\qquad\quad +\frac{\alpha-1}{\alpha}\kappa_{i}^{-2}(\nabla_{i} F^{\alpha})^{2}+\frac{\alpha+1}{n\alpha}\kappa_{i}^{-2}(\nabla_{i} F^{\alpha})^{2}.
\end{align*}
The coefficient of $(\nabla_{i} F^{\alpha})^{2}$ is
\begin{align*}
&-2\alpha f^{-1}\kappa_{1}^{-1}\df^{i}+2(\alpha-1) f^{-1}\kappa_{i}^{-1}\df^{i}+\frac{\alpha-1}{\alpha}\kappa_{i}^{-2}+\frac{\alpha+1}{n\alpha}\kappa_{i}^{-2}\\
&\qquad =\frac{(n-1)(\alpha-1)}{n\alpha}\kappa_{1}^{-2}>0,
\end{align*}
since $n\kappa_{1}^{-1}\df^{i}=\kappa_{1}^{-2}\sum_{i}\df^{i}\kappa_{i}=\kappa_{1}^{-2}f$. Thus, by choosing a smaller neighborhood $U$ of $x_{0}$ (still denoted by $U$), we have
\begin{align*}
\LF Z-2\alpha F^{-1}\frac{\partial F}{\partial h_{ij}}\nabla_{i}F^{\alpha}\nabla_{j}Z-\bar{g}(\lambda\partial_{r},\nabla Z)\geq 0.
\end{align*}
Since $Z(x)\leq W(x)\leq W(x_{0})=Z(x_{0})$ for all $x\in M^n$, we know that $x_{0}$ is a maximum point of $Z$. Therefore, by applying the strong maximum principle, $Z$ is constant in $U$, which implies  immediately that $W$ is constant in $U$. Since $M^n$ is connected, we obtain that $W$ is constant in $M^n$. Thus, all points of $M^n$ are umbilical. In Euclidean space, this implies that $M^n$ is a sphere.
\end{proof}

\section{Proof of Theorem \ref{Th:hemisphere}}
\label{Sec:hemisphere}

Proof of Theorem \ref{main} also works for Theorem \ref{Th:hemisphere} when $\alpha>1$. From Lemma \ref{Th:wmax}, the fact that $W$ is constant implies that $\bar{g}(\lambda\partial_{r},e_{i})=b^{ij}\nabla_{j}F^{\alpha}=0$. This means that $M^n$ is a slice $\{ r_{0} \}\times \mathbb{S}^{n}$ in $\mathbb{S}^{n+1}_{+}$.

Now we consider the case of $\alpha=1$. Without loss of generality, we assume that $F(I)=1$, where $I$ is the identity matrix. The auxiliary quantity $Z=FG$ can be written as $Z=\frac{g(\mu)}{f_{*}(\mu)}$ where $\mu=(\mu_{1},...,\mu_{n})=(\frac{1}{\kappa_{1}},...,\frac{1}{\kappa_{n}})$. $F$ is inverse concave means $f_{*}(\mu)$ is concave, and we know $g(\mu)=\frac{\sum_{i}\mu_{i}^{2}}{\sum_{j}\mu_{j}}$ is convex. $f_*$ and $g$ are both homogeneous of degree $1$ and $f_{*}(1,...,1)=g(1,...,1)=1$. From Lemma \ref{lem-conc}, we know $f_{*}\leq \sum_{j}\mu_{j}/n\leq g$. Therefore, $Z\geq 1$.
On the other hand, if $x_{0}$ is a maximum point of $W$, then $x_{0}$ is also a maximum point of $Z$. Combining with Lemma \ref{Th:wmax}, we know $Z_{\max}=Z(x_{0})=1$. This implies $Z\equiv 1$. Consequently, $W\equiv 1$. Thus we finish the proof.


\section{An alternative Proof of Theorem \ref{main}  and Theorem \ref{Th:hemisphere}}\label{sec:proof2}

In this section, we give an alternative proof of Theorem \ref{main}  and Theorem \ref{Th:hemisphere}.

Let $N=[0,\bar{r}	)\times \mathbb{S}^{n}$ be a warped product manifold with metric $\bar{g}=dr\otimes dr+\lambda^{2}(r)\sigma$ which has constant sectional curvature $\epsilon\geq 0$. Let $M^n$ be a closed strictly convex hypersurface in $N$.
We define a tensor on $M^n$ by
\begin{equation}\label{Tij}
T_{kl}= F^\alpha  b^{kl}-\frac{\alpha-1}{\alpha}\Phi g^{kl}-\beta g^{kl},
\end{equation}
where $(b^{kl})$  is the inverse matrix of $(h_{kl})$, $\Phi(r)=\int_{0}^{r}\lambda(s)ds$, $g^{kl}$ is the inverse of the metric and $\beta$ is a constant to be determined.
Recall that $\mathcal{L}$ denotes the operator $\alpha F^{\alpha-1}\frac{\partial F}{\partial h_{ij}}\nabla_{i}\nabla_{j}$, we have the following proposition.
\begin{prop}\label{prop3.1}
Let $M^n$ be a closed strictly convex hypersurfaces in $N$ such that \eqref{Eq:ssN} holds, then we have the following equation.
\begin{equation}\label{LTkl}
\begin{aligned}
\mathcal{L}T_{kl}=&(\alpha-1)\lambda' F^\alpha  b^{kl}-(\alpha-1)\lambda'F^{\alpha-1}\sum_i\frac{\partial F}{\partial h_{ii}}\delta_{kl}\\
&+\bar{g}(\lambda\partial_r,\nabla b^{kl})  F^\alpha +\bar{g}(\lambda\partial_r,\nabla  F^\alpha )b^{kl}\\
&+\alpha(\alpha-1)F^{2\alpha-2}b^{kp}b^{ql}\nabla_p F\nabla_qF+\alpha F^{2\alpha-1}b^{kp}b^{ql}\frac{\partial^2F}{\partial h_{ij}\partial h_{st}}h_{ijp}h_{stq}\\
&+2\alpha F^{2\alpha-1}b^{ks}b^{pt}b^{lq}\frac{\partial F}{\partial h_{ij}}h_{sti}h_{pqj}+2\alpha^2 F^{2\alpha-2}\frac{\partial F}{\partial h_{ij}}\nabla_i F\nabla_j b^{kl}\\
&-\epsilon\alpha F^{2\alpha-1}b^{kp}b^{ql}\frac{\partial F}{\partial h_{ij}}(h_{iq}\delta_{pj}-h_{ij}\delta_{pq}+h_{pq}\delta_{ij}-h_{pj}\delta_{iq}).
\end{aligned}
\end{equation}
\end{prop}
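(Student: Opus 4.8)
The identity \eqref{LTkl} is purely computational: it follows by expanding $\mathcal{L}$ as a second-order operator through the product rule and inserting the formulas already collected in Proposition \ref{Prop:formula}, using $\nabla_i F^\alpha=\alpha F^{\alpha-1}\nabla_i F$ together with the fact that the metric, and hence $g^{kl}$, is covariantly constant. First I would record the Leibniz rule for $\mathcal{L}=\alpha F^{\alpha-1}\frac{\partial F}{\partial h_{ij}}\nabla_i\nabla_j$ applied to the product of the scalar $F^\alpha$ and the tensor $b^{kl}$, which is valid because $(\tfrac{\partial F}{\partial h_{ij}})$ is symmetric:
\begin{equation*}
\mathcal{L}(F^\alpha b^{kl})=b^{kl}\,\mathcal{L}F^\alpha+F^\alpha\,\mathcal{L}b^{kl}+2\alpha F^{\alpha-1}\frac{\partial F}{\partial h_{ij}}\nabla_i F^\alpha\,\nabla_j b^{kl}.
\end{equation*}
Since $\nabla g^{kl}=0$ we have $\mathcal{L}g^{kl}=0$, so $\mathcal{L}(\Phi g^{kl})=g^{kl}\,\mathcal{L}\Phi$ and $\mathcal{L}(\beta g^{kl})=0$; in particular the constant $\beta$ drops out of the right-hand side. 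Hence
\begin{equation*}
\mathcal{L}T_{kl}=b^{kl}\,\mathcal{L}F^\alpha+F^\alpha\,\mathcal{L}b^{kl}+2\alpha F^{\alpha-1}\frac{\partial F}{\partial h_{ij}}\nabla_i F^\alpha\,\nabla_j b^{kl}-\frac{\alpha-1}{\alpha}\,g^{kl}\,\mathcal{L}\Phi .
\end{equation*}

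Next I would substitute formula (1) of Proposition \ref{Prop:formula} for $\mathcal{L}F^\alpha$ (multiplied by $b^{kl}$), formula (3) for $\mathcal{L}b^{kl}$ (multiplied by $F^\alpha$), and formula (6) for $\mathcal{L}\Phi$ (multiplied by $-\tfrac{\alpha-1}{\alpha}\delta_{kl}$), and rewrite the cross term via $\nabla_i F^\alpha=\alpha F^{\alpha-1}\nabla_i F$ so that it becomes $2\alpha^2 F^{2\alpha-2}\frac{\partial F}{\partial h_{ij}}\nabla_i F\,\nabla_j b^{kl}$, which is the penultimate term in \eqref{LTkl}. After relabeling dummy indices so they do not clash with the free indices $k,l$, the proof reduces to collecting like terms, and the only nontrivial point is that two pairs of zeroth-order terms cancel exactly: the scalar $-\alpha F^{2\alpha-1}\frac{\partial F}{\partial h_{ij}}h_{jm}h_{mi}$ produced (with the factor $b^{kl}$) by $b^{kl}\mathcal{L}F^\alpha$ cancels the term $+\alpha F^{2\alpha-1}b^{kl}\frac{\partial F}{\partial h_{ij}}h_{mj}h_{mi}$ coming from $F^\alpha\mathcal{L}b^{kl}$ (using symmetry of $h$); and $-(\alpha-1)F^{2\alpha}\delta_{kl}$ from $F^\alpha\mathcal{L}b^{kl}$ cancels $+(\alpha-1)F^{2\alpha}\delta_{kl}$ coming from $-\tfrac{\alpha-1}{\alpha}g^{kl}\mathcal{L}\Phi$. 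The $\lambda'$-terms combine as $\alpha\lambda' F^\alpha b^{kl}-\lambda' F^\alpha b^{kl}=(\alpha-1)\lambda' F^\alpha b^{kl}$, while the remaining $\lambda'$-term in $\mathcal{L}\Phi$ contributes $-(\alpha-1)\lambda' F^{\alpha-1}\sum_i\frac{\partial F}{\partial h_{ii}}\delta_{kl}$, and the transport terms assemble into $\bar g(\lambda\partial_r,\nabla b^{kl})F^\alpha+\bar g(\lambda\partial_r,\nabla F^\alpha)b^{kl}$. What survives is exactly the right-hand side of \eqref{LTkl}.

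There is no essential obstacle: all the differential-geometric content (the Simons-type identity, the Codazzi equations in a space form, and the self-similar equation \eqref{Eq:ssN}) has already been absorbed into Proposition \ref{Prop:formula}(3), so here the work is bookkeeping. The one step requiring care is keeping track of the index contractions in the curvature ($\epsilon$-)terms, and verifying that every term of the form $F^{2\alpha-1}\frac{\partial F}{\partial h_{ij}}h_{mj}h_{mi}$ and $F^{2\alpha}\delta_{kl}$ genuinely cancels; this is the point of the computation, since it shows the evolution of $T_{kl}$ carries no zeroth-order $\mathcal{W}^2$-type term, which is what later makes $T_{kl}$ tractable by the maximum principle.
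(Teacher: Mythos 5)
Your proposal is correct and follows essentially the same route as the paper: the paper likewise expands $\mathcal{L}T_{kl}$ by the product rule (noting $\nabla g^{kl}=0$ so $\beta$ drops out), substitutes formulas (1), (3) and (6) of Proposition \ref{Prop:formula}, and collects terms, with exactly the cancellations of the $F^{2\alpha-1}\frac{\partial F}{\partial h_{ij}}h_{mj}h_{mi}b^{kl}$ and $(\alpha-1)F^{2\alpha}\delta_{kl}$ terms and the combination $\alpha\lambda'F^\alpha b^{kl}-\lambda'F^\alpha b^{kl}=(\alpha-1)\lambda'F^\alpha b^{kl}$ that you describe.
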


\begin{proof}
By differentiating \eqref{Tij} twice, we have
\begin{equation}\label{dtij}
\begin{aligned}
\nabla_i\nabla_j T_{kl}=&\nabla_i(\nabla_j F^{\alpha} b^{kl}+ F^\alpha  \nabla_j b^{kl}-\frac{\alpha-1}{\alpha}\nabla_j\Phi g^{kl})\\
=&\nabla_i\nabla_j F^\alpha b^{kl}+\nabla_i F^\alpha \nabla_jb^{kl}+\nabla_j F^\alpha \nabla_ib^{kl}\\
&+ F^\alpha \nabla_i\nabla_jb^{kl}-\frac{\alpha-1}{\alpha}\nabla_i\nabla_j\Phi g^{kl},
\end{aligned}
\end{equation}
which implies that
\begin{equation}\label{dtij2}
\begin{aligned}
\mathcal{L} T_{kl}=&\mathcal{L}  F^\alpha  b^{kl}+ F^\alpha  \mathcal{L} b^{kl}+2\alpha^2 F^{2\alpha-2}\frac{\partial F}{\partial h_{ij}}\nabla_iF\nabla_j b^{kl}\\
&-\frac{\alpha-1}{\alpha}\mathcal {L}\Phi g^{kl}.
\end{aligned}
\end{equation}
Substituting equations (1),(3) and (6) in Proposition \ref{Prop:formula} into \eqref{dtij2}, we obtain that
\begin{equation}\label{dtij3}
\begin{aligned}
\mathcal{L} T_{kl}=&\Big(\bar{g}( \lambda\partial_{r}, \nabla F^{\alpha})+\alpha\lambda' F^{\alpha}-\alpha F^{2\alpha-1}\frac{\partial F}{\partial h_{ij}}h_{jm}h_{mi}\Big) b^{kl}\\
&+ F^\alpha  \Big(\bar{g}(\lambda\partial_{r},\nabla b^{kl})-\lambda'b^{kl}+\alpha(\alpha-1)F^{\alpha-2}b^{kp}b^{ql}\nabla_{p}F\nabla_{q}F\\
&\quad +\alpha F^{\alpha-1}b^{kp}b^{ql}\frac{\partial^{2} F}{\partial h_{ij}\partial h_{st}}h_{ijp}h_{stq}+\alpha F^{\alpha-1}b^{kl}\frac{\partial F}{\partial h_{ij}}h_{mj}h_{mi}\\
&\quad -(\alpha-1) F^{\alpha}\delta_{kl}+2\alpha F^{\alpha-1}b^{ks}b^{pt}b^{lq}\frac{\partial F}{\partial h_{ij}}h_{sti}h_{pqj} \\
&\quad -\epsilon \alpha F^{\alpha-1} b^{kp}b^{ql} \frac{\partial F}{\partial h_{ij}}\big(h_{iq}\delta_{pj}-h_{ij}\delta_{pq}+h_{pq}\delta_{ij}-h_{pj}\delta_{iq}\big)\Big)\\
&+2\alpha^2 F^{2\alpha-2}\frac{\partial F}{\partial h_{ij}}\nabla_iF\nabla_j b^{kl}\\
&-\frac{\alpha-1}{\alpha}\Big(\lambda'\alpha F^{\alpha-1}\sum_{i}\frac{\partial F}{\partial h_{ii}}-\alpha F^{2\alpha}\Big)g^{kl},
\end{aligned}
\end{equation}
from which we immediately derive \eqref{LTkl}.
\end{proof}

\begin{proof}[Proof of Theorem \ref{main} and Theorem \ref{Th:hemisphere}]
We denote the right-hand side of \eqref{LTkl} by $N_{kl}$, i.e.,
\begin{equation}\label{Nkl}
\begin{aligned}
N_{kl}=&(\alpha-1)\lambda' F^\alpha  b^{kl}-(\alpha-1)\lambda'F^{\alpha-1}\sum_i\frac{\partial F}{\partial h_{ii}}\delta_{kl}\\
&+\bar{g}(\lambda\partial_r,\nabla b^{kl})  F^\alpha +\bar{g}(\lambda\partial_r,\nabla  F^\alpha )b^{kl}\\
&+\alpha(\alpha-1)F^{2\alpha-2}b^{kp}b^{ql}\nabla_p F\nabla_qF+\alpha F^{2\alpha-1}b^{kp}b^{ql}\frac{\partial^2F}{\partial h_{ij}\partial h_{st}}h_{ijp}h_{stq}\\
&+2\alpha F^{2\alpha-1}b^{ks}b^{pt}b^{lq}\frac{\partial F}{\partial h_{ij}}h_{sti}h_{pqj}+2\alpha^2 F^{2\alpha-2}\frac{\partial F}{\partial h_{ij}}\nabla_i F\nabla_j b^{kl}\\
&-\epsilon\alpha F^{2\alpha-1}b^{kp}b^{ql}\frac{\partial F}{\partial h_{ij}}(h_{iq}\delta_{pj}-h_{ij}\delta_{pq}+h_{pq}\delta_{ij}-h_{pj}\delta_{iq}).
\end{aligned}
\end{equation}

We consider $T$ as a function on unit tangent bundle $UTM^n$, i.e., $x\in M^n$, $v$ is a unit vector in
$T_xM^n$ and $T(x,v)=T_{ij}(x)v^iv^j$.
Since $M^n$ is compact, $T$ attains a minimum and a maximum on $UTM^n$. We choose $\beta$ such that $T_{ij}\leq 0$ everywhere on $M^n$ and $T_{ij}v^iv^j=0$ at some point $x_{0}\in M^n$ for some unit tangent vector $v$. We can obtain a contradiction to the maximum principle applied to $\eqref{LTkl}$ by estimating the derivative terms
as that done in \cite{BAndrews07} (see also \cite{A-M-Z}). As in \cite{McCoy11}, it suffices to show that at $x_{0}$, we have
\begin{equation}\label{goal}
N_{kl}v^kv^l-\inf_{\Gamma} 2\alpha F^{\alpha-1}\frac{\partial F}{\partial h_{kl}}(2\Gamma^p_k\nabla_lT_{ip}v^i-\Gamma^p_k\Gamma^q_l T_{pq})\geq 0,
\end{equation}
where $N_{kl}$ is defined in $\eqref{Nkl}$.

We can choose coordinates at  $x_0$ such that $v=e_1$. Since $T\leq 0$ and $(x_{0},v)$ is a maximum point of $T$, we have 
\begin{equation}\label{critical}
T_{1k}=0,\nabla_k T_{11}=0,~\text{for any}~k=1,\ldots,n.
\end{equation}
As $T_{1k}=0,~\text{for any} ~k=1,\ldots,n$, $e_1$ is an eigenvector of $T_{kl}$ with eigenvalue $0$. We can further choose coordinates at  $x_0$ such that $h_{ij}=\kappa_i\delta_{ij}$ and $g_{ij}=\delta_{ij}$.
From \eqref{critical}, we obtain the following relations.
\begin{align}
0=T_{11}&= F^\alpha \kappa_1^{-1}-\frac{\alpha-1}{\alpha}\Phi-\beta,\label{lambda1}\\
0\geq T_{pp}&= F^\alpha \kappa_p^{-1}-\frac{\alpha-1}{\alpha}\Phi-\beta,~p=2,\ldots,n.\label{lambdap}
\end{align}
From \eqref{lambda1} and \eqref{lambdap}, we know that $\kappa_p\geq\kappa_1$ for $p=2,\ldots,n$. With out loss of generality, 
we can choose coordinates such that  $\kappa_1=\cdots=\kappa_{\mu}<\kappa_{\mu+1}\leq\cdots\leq
\kappa_n$ at  $x_0$. By using \eqref{lambda1} and \eqref{lambdap} again, we have
\begin{equation}\label{tpp}
	T_{mm}=0,~1\leq m\leq \mu;~	0>T_{pp}= F^\alpha (\kappa_p^{-1}-\kappa_1^{-1})=\frac{ F^\alpha }{\kappa_p\kappa_1}(\kappa_1-\kappa_p),~p>\mu.
\end{equation}
We also have that
\begin{equation}\label{tk1m}
\nabla_kT_{1m}=0,~2\leq m\leq \mu,~k=1,\ldots,n.
\end{equation}
In fact, if there exists some $2\leq m \leq\mu$ and $1\leq k\leq n$ such that $\nabla_kT_{1m}\neq 0$, we can easily choose $\Gamma$ such that
the left-hand side of $\eqref{goal}$ is positive, which contradicts the property that $(x_0,v=e_1)$ is a maximum point of $T$.

By using the definition of $T_{ij}$, we have
\begin{equation*}
	\nabla_kT_{ij}=\nabla_k F^\alpha b^{ij}+ F^\alpha \nabla_kb^{ij}-\frac{\alpha-1}{\alpha}\nabla_k\Phi g^{ij}.
\end{equation*}
Since $g_{ij}=\delta_{ij},~h_{ij}=\kappa_i\delta_{ij}$ at  $x_0$, we have
\begin{equation}\label{Tk1p}
	\begin{aligned}
		\nabla_kT_{1p}= F^\alpha \nabla_kb^{1p}=- F^\alpha \kappa_1^{-1}\kappa_p^{-1}h_{1pk},~p\geq 2,~k=1,\ldots,n,
	\end{aligned}
\end{equation}
and 
\begin{equation*}
	\begin{aligned}
		0=\nabla_kT_{11}=\nabla_k F^\alpha \kappa_1^{-1}+  F^\alpha  \nabla_kb^{11}-\frac{\alpha-1}{\alpha}\nabla_k\Phi,~k=1,\ldots,n,
	\end{aligned}
\end{equation*}
which implies that
\begin{equation}\label{t11k}
 F^\alpha \nabla_kb^{11}=\frac{\alpha-1}{\alpha}\nabla_k\Phi-\nabla_k F^\alpha \kappa_1^{-1}
\end{equation}
at  $x_0$. On the other hand, by using the definition of $\Phi$, we have
\begin{equation}\label{phik}
\nabla_k\Phi=\bar{g}(\lambda\partial_r,e_k).
\end{equation}
By using the self-similar equation \eqref{maine}, we have the following relation at  $x_0$.
\begin{equation}\label{fk}
	\nabla_k F^\alpha =\bar{g}(\lambda\partial_r,h_{kl}e_l)
	=\kappa_k\bar{g}(\lambda\partial_r,e_k).
\end{equation}	
\eqref{phik} and \eqref{fk} implies that
\begin{equation}\label{phif}
	\nabla_k\Phi=\kappa_k^{-1}\nabla_k F^\alpha =\kappa_k^{-1}\alpha F^{\alpha-1}\nabla_kF.
\end{equation}
Combining the equations \eqref{t11k} and \eqref{phif}, we have
\begin{equation}\label{b11k}
	\nabla_kb^{11}=(\frac{\alpha-1}{\alpha}\kappa_k^{-1}-\kappa_1^{-1})\alpha F^{-1}\nabla_kF.
\end{equation}	
Since $\nabla_kb^{11}=-\kappa_1^{-2}h_{11k}$ at  $x_0$, we have
\begin{equation}\label{h11k}
	h_{11k}=-\kappa_1^2(\frac{\alpha-1}{\alpha}\kappa_k^{-1}-\kappa_1^{-1})\alpha F^{-1}\nabla_kF.
\end{equation}	
Using \eqref{tk1m} and \eqref{Tk1p}, we have
$
		0=\nabla_kT_{1m}=- F^\alpha \kappa_1^{-1}\kappa_m^{-1}h_{1mk},~2\leq m\leq \mu,~k=1,\ldots,n,
$
which in combination with \eqref{b11k} and \eqref{h11k} implies the following information at $x_0$.
\begin{equation}\label{info}
	h_{1mk}=0,~\nabla_mb^{11}=0,~\nabla_mF=0,~2\leq m\leq \mu,~k=1,\ldots,n,
\end{equation}

Recall that $v=e_1$, by using \eqref{tpp} and \eqref{tk1m}, to prove $\eqref{goal}$, it suffices to prove that
\begin{equation}\label{goal2}
Q:=N_{11}-\inf_{\Gamma}~ 2\alpha F^{\alpha-1}\frac{\partial F}{\partial h_{kk}}\sum_{p>\mu}(2\Gamma^p_k\nabla_kT_{1p}-(\Gamma^p_k)^2T_{pp})\geq 0.
\end{equation}
By choosing $\Gamma^p_k=\frac{\nabla_kT_{1p}}{T_{pp}}$ and using \eqref{tpp}, the infimum over $\Gamma$ in \eqref{goal2} can be computed as follows.
\begin{equation}\label{inf}
\begin{aligned}
\inf_{\Gamma}& ~2\alpha F^{\alpha-1}\frac{\partial F}{\partial h_{kk}}\sum_{p>\mu}(2\Gamma^p_k\nabla_kT_{1p}-(\Gamma^p_k)^2T_{pp})\\
&=2\alpha F^{\alpha-1}\frac{\partial F}{\partial h_{kk}}\sum_{p>\mu}\frac{\kappa_p\kappa_1}{ F^\alpha (\kappa_1-\kappa_p)}(\nabla_kT_{1p})^2\\
&=2\alpha F^{\alpha-1}\frac{\partial F}{\partial h_{kk}}\sum_{p>\mu}\frac{\kappa_p\kappa_1}{ F^\alpha (\kappa_1-\kappa_p)}F^{2\alpha}\kappa_1^{-2}\kappa_p^{-2}h_{1pk}^2\\
&=2\alpha F^{2\alpha-1}\frac{\partial F}{\partial h_{kk}}\sum_{p>\mu}\frac{\kappa_p^{-1}\kappa_1^{-1}}{\kappa_1-\kappa_p}h_{1pk}^2,
\end{aligned}
\end{equation}
where we used \eqref{Tk1p} in the second equality.

In view of \eqref{Nkl} and \eqref{inf}, recall that  $g_{ij}=\delta_{ij}$ and $h_{ij}=\kappa_i\delta_{ij}$ at  $x_0$,
we have
\begin{equation}\label{Q}
\begin{aligned}
Q=&(\alpha-1)\lambda' F^\alpha  \kappa_1^{-1}-(\alpha-1)\lambda'F^{\alpha-1}\sum_i\frac{\partial F}{\partial h_{ii}}\\
&+\bar{g}(\lambda\partial_r,\nabla b^{11})  F^\alpha +\bar{g}(\lambda\partial_r,\nabla  F^\alpha )\kappa_1^{-1}\\
&+\alpha(\alpha-1)F^{2\alpha-2}\kappa_1^{-2}(\nabla_1 F)^2+\alpha F^{2\alpha-1}\kappa_1^{-2}\frac{\partial^2F}{\partial h_{ij}\partial h_{st}}h_{ij1}h_{st1}\\
&+2\alpha F^{2\alpha-1}\kappa_1^{-2}\kappa_p^{-1}\frac{\partial F}{\partial h_{ii}}h_{1pi}^2+2\alpha^2 F^{2\alpha-2}\frac{\partial F}{\partial h_{ii}}\nabla_i F\nabla_i b^{11}\\
&-\epsilon\alpha F^{2\alpha-1}\kappa_1^{-2}\frac{\partial F}{\partial h_{ii}}(\kappa_1-\kappa_i)-2\alpha F^{2\alpha-1}\frac{\partial F}{\partial h_{kk}}\sum_{p>\mu}\frac{\kappa_p^{-1}\kappa_1^{-1}}{\kappa_1-\kappa_p}h_{1pk}^2\\
=&Q_1+Q_2+Q_3,
\end{aligned}
\end{equation}
with $Q_1,~Q_2,~Q_3$ defined by
\begin{equation}\label{Q123}
	\begin{aligned}
		Q_1:=&(\alpha-1)\lambda'F^{\alpha-1}\sum_m\frac{\partial F}{\partial h_{mm}}(\frac{\kappa_m}{\kappa_1}-1)+\epsilon\alpha F^{2\alpha-1}\kappa_1^{-1}\sum_m\frac{\partial F}{\partial h_{mm}}(\frac{\kappa_m}{\kappa_1}-1),\\
		Q_2:=&\bar{g}(\lambda\partial_r,\nabla b^{11})  F^\alpha +\bar{g}(\lambda\partial_r,\nabla  F^\alpha )\kappa_1^{-1}\\
		&+2\alpha F^{2\alpha-1}\kappa_1^{-3}\frac{\partial F}{\partial h_{ii}}h_{11i}^2+2\alpha^2 F^{2\alpha-2}\frac{\partial F}{\partial h_{ii}}\nabla_i F\nabla_i b^{11},\\
		Q_3:=&\alpha(\alpha-1)F^{2\alpha-2}\kappa_1^{-2}(\nabla_1 F)^2+\alpha F^{2\alpha-1}\kappa_1^{-2}\frac{\partial^2F}{\partial h_{ij}\partial h_{st}}h_{ij1}h_{st1}\\
		&+2\alpha F^{2\alpha-1}\sum_{p>\mu}\frac{\kappa_1^{-2}}{\kappa_p-\kappa_1}\frac{\partial F}{\partial h_{kk}}h_{1pk}^2.
	\end{aligned}
\end{equation}
We estimate the terms in \eqref{Q123} as follows.
First, since $\alpha\geq 1$ and $\kappa_1\leq \kappa_m,~1\leq m\leq n$, we have $Q_1\geq 0$. Substituting \eqref{b11k} and \eqref{h11k} into the expression of $Q_2$, using \eqref{t11k}, \eqref{fk} and \eqref{info}, we can rewrite $Q_2$
in terms of $\nabla_kF$.
\begin{equation}\label{Q2final}
	\begin{aligned}
	Q_2=&\alpha(\alpha-1)\kappa_k^{-2}F^{2\alpha-2}(\nabla_kF)^2\\&+2\alpha^3F^{2\alpha-3}\kappa_1\frac{\partial F}{\partial h_{kk}}(\frac{\alpha-1}{\alpha}\kappa_k^{-1}-\kappa_1^{-1})^2(\nabla_kF)^2	\\
&+2\alpha^3F^{2\alpha-3}\frac{\partial F}{\partial h_{kk}}(\frac{\alpha-1}{\alpha}\kappa_k^{-1}-\kappa_1^{-1})(\nabla_kF)^2\\
=&\alpha(\alpha-1)\kappa_1^{-2}F^{2\alpha-2}(\nabla_1F)^2+\alpha(\alpha-1)\sum_{p>\mu}\kappa_p^{-2}F^{2\alpha-2}(\nabla_pF)^2\\&+2\alpha F^{2\alpha-3}\kappa_1^{-1}\frac{\partial F}{\partial h_{11}}(\nabla_1F)^2+2\alpha^3F^{2\alpha-3}\kappa_1\sum_{p>\mu}\frac{\partial F}{\partial h_{pp}}(\frac{\alpha-1}{\alpha}\kappa_p^{-1}-\kappa_1^{-1})^2(\nabla_pF)^2	\\
&-2\alpha^2 F^{2\alpha-3}\kappa_1^{-1}\frac{\partial F}{\partial h_{11}}(\nabla_1F)^2+2\alpha^3F^{2\alpha-3}\sum_{p>\mu}\frac{\partial F}{\partial h_{pp}}(\frac{\alpha-1}{\alpha}\kappa_p^{-1}-\kappa_1^{-1})(\nabla_pF)^2.
	\end{aligned}
\end{equation}
Applying \eqref{F-dd}, we have
\begin{equation}\label{Q3t1}
	\begin{aligned}
	\frac{\partial^2F}{\partial h_{ij}\partial h_{st}}h_{ij1}h_{st1}
	=&\frac{\partial F}{\partial h_{ij}}h_{ii1}h_{jj1}+2\sum_{i>j}\frac{1}{\kappa_i-\kappa_j}(\frac{\partial F}{\partial h_{ii}}-\frac{\partial F}{\partial h_{jj}})h_{ij1}^2\\
	=&\frac{\partial F}{\partial h_{ij}}h_{ii1}h_{jj1}+2\sum_{p>\mu}\frac{1}{\kappa_p-\kappa_1}(\frac{\partial F}{\partial h_{pp}}-\frac{\partial F}{\partial h_{11}})h_{p11}^2\\
	&+2\sum_{p>k>\mu}\frac{1}{\kappa_p-\kappa_k}(\frac{\partial F}{\partial h_{pp}}-\frac{\partial F}{\partial h_{kk}})h_{pk1}^2.
	\end{aligned}
\end{equation}
We also have
\begin{equation}
	\begin{aligned}\label{Q3t2}
	\sum_{p>\mu}&\frac{2}{\kappa_p-\kappa_1}\frac{\partial F}{\partial h_{kk}}h_{1pk}^2\\
	=&\sum_{p>\mu}	\frac{2}{\kappa_p-\kappa_1}\frac{\partial F}{\partial h_{11}}h_{1p1}^2+\sum_{p>\mu}	\frac{2}{\kappa_p-\kappa_1}\frac{\partial F}{\partial h_{pp}}h_{1pp}^2\\
	&+\sum_{p>k>\mu}\frac{2}{\kappa_p-\kappa_1}\frac{\partial F}{\partial h_{kk}}h_{1pk}^2+\sum_{p>k>\mu}\frac{2}{\kappa_k-\kappa_1}\frac{\partial F}{\partial h_{pp}}h_{1pk}^2.	
	\end{aligned}
\end{equation}
Combing \eqref{Q3t1} and \eqref{Q3t2},
we can simplify  and estimate $Q_3$ as follows.
\begin{equation*}
	\begin{aligned}
		Q_3=&\alpha(\alpha-1)F^{2\alpha-2}\kappa_1^{-2}(\nabla_1 F)^2\\
		&+\alpha F^{2\alpha-1}\kappa_1^{-2}(\frac{\partial^2F}{\partial h_{ij}\partial h_{st}}h_{ij1}h_{st1}+\sum_{p>\mu}\frac{2}{\kappa_p-\kappa_1}\frac{\partial F}{\partial h_{kk}}h_{1pk}^2)\\
		=&\alpha(\alpha-1)F^{2\alpha-2}\kappa_1^{-2}(\nabla_1 F)^2\\
		&+\alpha F^{2\alpha-1}\kappa_1^{-2}\Big(\frac{\partial F}{\partial h_{ij}}h_{ii1}h_{jj1}+\sum_{p>\mu}	\frac{2}{\kappa_p-\kappa_1}\frac{\partial F}{\partial h_{pp}}h_{1pp}^2+\sum_{p>\mu}\frac{2}{\kappa_p-\kappa_1}\frac{\partial F}{\partial h_{pp}}h_{p11}^2\\
		&\hspace{6em}+2\sum_{p>k>\mu}(\frac{1}{\kappa_p-\kappa_k}(\frac{\partial F}{\partial h_{pp}}-\frac{\partial F}{\partial h_{kk}})+\frac{1}{\kappa_p-\kappa_1}\frac{\partial F}{\partial h_{kk}}+\frac{1}{\kappa_k-\kappa_1}\frac{\partial F}{\partial h_{pp}})h_{1pk}^2\Big)\\
		\geq &\alpha(\alpha-1)F^{2\alpha-2}\kappa_1^{-2}(\nabla_1 F)^2\\
		&+\alpha F^{2\alpha-1}\kappa_1^{-2}\Big(\frac{\partial F}{\partial h_{ij}}h_{ii1}h_{jj1}+\sum_{p>\mu}	\frac{2}{\kappa_p-\kappa_1}\frac{\partial F}{\partial h_{pp}}h_{1pp}^2+\sum_{p>\mu}\frac{2}{\kappa_p-\kappa_1}\frac{\partial F}{\partial h_{pp}}h_{p11}^2\Big)\\
		\geq &\alpha(\alpha-1)F^{2\alpha-2}\kappa_1^{-2}(\nabla_1 F)^2\\
		&+\alpha F^{2\alpha-1}\kappa_1^{-2}\Big(2F^{-1}(\frac{\partial F}{\partial h_{kk}}h_{kk1})^2-2\sum_{m\geq 1}\kappa_m^{-1}\frac{\partial F}{\partial h_{mm}}h_{mm1}^2\\
		&\hspace{6em}+\sum_{p>\mu}	\frac{2}{\kappa_p-\kappa_1}\frac{\partial F}{\partial h_{pp}}h_{pp1}^2+\sum_{p>\mu}\frac{2}{\kappa_p-\kappa_1}\frac{\partial F}{\partial h_{pp}}h_{11p}^2\Big)\\
			\geq &\alpha(\alpha-1)F^{2\alpha-2}\kappa_1^{-2}(\nabla_1 F)^2\\
			&+\alpha F^{2\alpha-1}\kappa_1^{-2}\Big(2F^{-1}(\nabla_1F)^2-2\kappa_1^{-1}\frac{\partial F}{\partial h_{11}}h_{111}^2+2\sum_{p>\mu}\frac{1}{\kappa_p-\kappa_1}\frac{\partial F}{\partial h_{pp}}h_{11p}^2\Big),
	\end{aligned}
\end{equation*}
where we used \eqref{s2:inv-conc} and \eqref{inv-conc-1} in the first inequality and the second inequality, respectively.
Substituting \eqref{h11k} into the above expression, we can estimate $Q_3$
in terms of $\nabla_kF$.
\begin{equation}\label{Q3final}	
\begin{aligned}
		Q_3\geq&\alpha(\alpha-1)F^{2\alpha-2}\kappa_1^{-2}(\nabla_1 F)^2\\
		&+\alpha F^{2\alpha-1}\kappa_1^{-2}\Big(2F^{-1}(\nabla_1F)^2-2\kappa_1^{-1}\frac{\partial F}{\partial h_{11}}(\kappa_1 F^{-1}\nabla_1F)^2\\
		&\hspace{6em}+2\sum_{p>\mu}\frac{1}{\kappa_p-\kappa_1}\frac{\partial F}{\partial h_{pp}}(-\kappa_1^2(\frac{\alpha-1}{\alpha}\kappa_p^{-1}-\kappa_1^{-1})\alpha F^{-1}\nabla_pF)^2\Big)\\
		=&\alpha(\alpha-1)F^{2\alpha-2}\kappa_1^{-2}(\nabla_1 F)^2\\
		&+\alpha F^{2\alpha-1}\kappa_1^{-2}\Big(2(F-\frac{\partial F}{\partial h_{11}}\kappa_1)F^{-2}(\nabla_1F)^2\\
		&\hspace{6em}+2\sum_{p>\mu}\frac{1}{\kappa_p-\kappa_1}\frac{\partial F}{\partial h_{pp}}\kappa_1^4(\frac{\alpha-1}{\alpha}\kappa_p^{-1}-\kappa_1^{-1})^2\alpha^2 F^{-2}(\nabla_pF)^2\Big)\\
		=&\alpha F^{2\alpha-2}\kappa_1^{-2}(\alpha+1-2\frac{\partial F}{\partial h_{11}}\kappa_1F^{-1})(\nabla_1F)^2\\
		&+2\sum_{p>\mu}\alpha^3F^{2\alpha-3}\kappa_1^2(\kappa_p-\kappa_1)^{-1}\frac{\partial F}{\partial h_{pp}}(\frac{\alpha-1}{\alpha}\kappa_p^{-1}-\kappa_1^{-1})^2(\nabla_pF)^2.
	\end{aligned}
\end{equation}
Combing \eqref{Q2final} and \eqref{Q3final}, we obtain that
\begin{equation}
	\begin{aligned}\label{Q2Q3}
		Q_2+Q_3\geq &\alpha(\alpha-1)\kappa_1^{-2}F^{2\alpha-2}(\nabla_1F)^2+\alpha(\alpha-1)\sum_{p>\mu}\kappa_p^{-2}F^{2\alpha-2}(\nabla_pF)^2\\&+2\alpha F^{2\alpha-3}\kappa_1^{-1}\frac{\partial F}{\partial h_{11}}(\nabla_1F)^2+2\alpha^3F^{2\alpha-3}\kappa_1\sum_{p>\mu}\frac{\partial F}{\partial h_{pp}}(\frac{\alpha-1}{\alpha}\kappa_p^{-1}-\kappa_1^{-1})^2(\nabla_pF)^2	\\
		&-2\alpha^2 F^{2\alpha-3}\kappa_1^{-1}\frac{\partial F}{\partial h_{11}}(\nabla_1F)^2+2\alpha^3F^{2\alpha-3}\sum_{p>\mu}\frac{\partial F}{\partial h_{pp}}(\frac{\alpha-1}{\alpha}\kappa_p^{-1}-\kappa_1^{-1})(\nabla_pF)^2\\
		&+\alpha F^{2\alpha-2}\kappa_1^{-2}(\alpha+1-2\frac{\partial F}{\partial h_{11}}\kappa_1F^{-1})(\nabla_1F)^2\\
		&+2\sum_{p>\mu}\alpha^3F^{2\alpha-3}\kappa_1^2(\kappa_p-\kappa_1)^{-1}\frac{\partial F}{\partial h_{pp}}(\frac{\alpha-1}{\alpha}\kappa_p^{-1}-\kappa_1^{-1})^2(\nabla_pF)^2\\
		=&	2\alpha^2F^{2\alpha-3}\kappa_1^{-2}(F-\frac{\partial F}{\partial h_{11}}\kappa_1)(\nabla_1F)^2\\
		&+\sum_{p>\mu}\Big(\alpha(\alpha-1)F^{2\alpha-2}\kappa_p^{-2}+\frac{2\alpha^2F^{2\alpha-3}\kappa_1}{\kappa_p-\kappa_1}(\kappa_1^{-1}-\frac{\alpha-1}{\alpha}\kappa_p^{-1})\frac{\partial F}{\partial h_{pp}}\Big)(\nabla_pF)^2\geq 0.
	\end{aligned}
\end{equation}
Recall that $Q_1\geq 0$, we get that $Q=Q_1+Q_2+Q_3\geq 0$, hence we have proved \eqref{goal2}.
By applying maximum principle to \eqref{LTkl}, $T_{ij}v^iv^j$ can not attain a maximum unless there is a parallel vector field $v=e_1$ such that $T_{ij}v^iv^j\equiv 0$,
which implies that all the nonnegative terms above must be identically zero on $M^n$. Note that $F=\sum\limits_k\frac{\partial F}{\partial h_{kk}}\kappa_k> \frac{\partial F}{\partial h_{11}}\kappa_1$, $\kappa_p>\kappa_1$ when $p>\mu$, from the last inequality of \eqref{Q2Q3}, we obtain that $\nabla_1F=0$ and $\nabla_pF=0$
for $p>\mu$. Due to the same reason with \eqref{info}, we also have that  $\nabla_mF=0$
for $2\leq m\leq\mu$. Hence, we obtain that $F$ is identically constant on $M^n$. Since $M^n$ is strictly convex, it follows immediately from  the equation \eqref{fk} that $M^n$ is a round sphere in Euclidean case and $M^n$ is a slice $\{ r_{0} \}\times \mathbb{S}^{n}$ in hemisphere case.
\end{proof}


\end{document}